\documentclass[onefignum,onetabnum]{siamart251216}
\usepackage{enumitem}
\usepackage{amsfonts}
\usepackage{latexsym} 
\usepackage{float}
\usepackage{geometry}
\usepackage[latin9]{inputenc}
\usepackage{amstext}
\usepackage{amsmath}
\usepackage{amssymb}
\usepackage{graphicx}
\usepackage{xcolor}
\usepackage{booktabs}
\usepackage{caption}

\makeatletter
\theoremstyle{plain}
\newtheorem{thm}{Theorem}
\theoremstyle{plain}
\newtheorem{lem}[thm]{Lemma}
\newtheorem{assumption}{Assumption}
\usepackage{rotating}
\usepackage{epsfig}
\usepackage{subfigure}
\usepackage{url}
\usepackage{diagbox}
\usepackage{booktabs}
\newcommand{\tnorm}[1]{%
  \left|\!\left|\!\left| #1 \right|\!\right|\!\right|
}

\makeatother

\title{A Two-Level Preconditioner Based on Dominant Components for Time-Dependent Multiscale High-Contrast Problem}

\begin{document}

 \author{Yating Wang \thanks{School of Mathematics and Statistic, Xi'an Jiaotong University, Xi'an, People's Republic of China. (\email{yatingwang@xjtu.edu.cn}).}
 \and  Yibao Li \thanks{School of Mathematics and Statistic, Xi'an Jiaotong University, Xi'an, People's Republic of China. (\email{yibaoli@xjtu.edu.cn}).}
 \and Wing Tat Leung \thanks{Department of Mathematics, City University of Hong Kong, Hong Kong Special Administrative Region. (\email{wtleung27@cityu.edu.hk}} )} 
\maketitle

\begin{abstract}
In this work, we develop a two-level overlapping preconditioner for time-dependent problems in high-contrast multiscale media. We present a coarse-space construction based on multiscale methods, with emphasis on the relaxed nonlocal multicontinuum (NLMC) method. The NLMC space can be separated into components representing the high-permeability regions and the low-permeability background. We show that the complete NLMC space effectively preconditions the heterogeneous stiffness operator, whereas the high-permeability component alone captures the contrast-dependent global modes. For suitable small time-step sizes, the mass matrix controls the low-permeability contribution and only the high-permeability component in NLMC space is required for the global coarse correction in the two-level preconditioner. For general time-step sizes, performance can be maintained by using the full NLMC space or augmenting the high-permeability component with a standard multiscale space. The proposed coarse-space construction lowers the computational cost while preserving robustness with respect to coefficient contrast and fine-scale resolution. To further improve efficiency, the multiscale basis functions can be constructed by iteratively solving the relaxed energy-minimizing formulation. We demonstrate the robustness, efficiency and scalability of the proposed method through several numerical experiments.
\end{abstract}

\section{Introduction}
Subsurface flow through porous media and heat transfer in heterogeneous materials are fundamental processes in a wide range of scientific and engineering applications, including reservoir simulation, groundwater flow and contaminant transport, and thermal transport in composite or geological media. A common feature of these problems is the coexistence of multiple spatial scales. Material parameters such as permeability and thermal conductivity may vary rapidly within the computational domain, differ by several orders of magnitude, and form complicated connected structures such as fractures, inclusions, and long conductive channels. Accurate numerical simulation therefore requires a fine mesh capable of resolving the relevant heterogeneities. The resulting discrete systems are typically very large and severely ill-conditioned, with their conditioning deteriorating under both mesh refinement and increasing coefficient contrast. In particular, highly conductive channels may generate solution components that cannot be represented adequately by conventional coarse-scale polynomial functions. These features make the design of accurate, robust, and computationally scalable solvers especially challenging for high-contrast multiscale problems \cite{ye2024robust, ye2025highly, GalvisChungEfendievLeung2018}.

A broad class of multiscale model-reduction methods has been developed to address these difficulties. Numerical homogenization and upscaling methods replace unresolved heterogeneities by effective coarse-scale parameters, thereby reducing the dimension of the original problem. The multiscale finite element method (MsFEM) instead constructs local basis functions by solving cell problems that incorporate fine-scale coefficient information \cite{HouWu1997}. The generalized multiscale finite element method (GMsFEM) extends this idea through local snapshot spaces and spectral decompositions, from which a small number of dominant modes are selected to form the multiscale approximation space \cite{EfendievGalvisHou2013}. In high-contrast media, the eigenfunctions associated with small local eigenvalues are particularly important because they represent fractures, channels, and other nonlocal features that cannot be localized by standard coarse functions. Constraint energy-minimizing GMsFEM (CEM-GMsFEM) further combines such an auxiliary spectral space with constrained or relaxed energy minimization over oversampled regions. When the relevant channel modes are included in the auxiliary space, the resulting localized basis functions provide convergence that is robust with respect to the coefficient contrast \cite{ChungEfendievLeung2018CEM}. Nonlocal multicontinuum (NLMC) methods follow a related principle by introducing separate coarse variables for distinct physical continua, such as connected high-permeability networks and the surrounding low-permeability matrix, and by constructing energy-minimizing basis functions that encode their nonlocal interactions
\cite{ChungEfendievLeungVasilyevaWang2018}. Although their constructions differ, these methods share the central strategy of embedding heterogeneous fine-scale information into effective multiscale basis functions. In their original and most common model-reduction role, these basis functions span a reduced approximation space in which the coarse-scale problem is solved directly.

An alternative to solving a reduced problem is to retain the fine-grid discretization and accelerate the resulting algebraic system using a preconditioned iterative method \cite{Griebel1995,Dolean2015,Heinlein2018,Kim2017,Calvo2016,Sarkis1997,Wang2014,Saad2003,Li2020,Dolean2012,Graham2007,Nataf2010,Heinlein2019,Briggs2000}. This approach is particularly relevant for large-scale simulations, for which sparse direct solvers may require excessive memory and exhibit limited parallel scalability. In a two-level overlapping domain decomposition method, overlapping local solvers eliminate localized error components, while a global coarse solver communicates information across the entire computational domain. The construction of the coarse space is therefore important for the performance of the preconditioner. For high-contrast problems, classical coarse spaces composed of low-order polynomial functions generally do not capture the contrast-dependent near-kernel modes associated with conductive channels. Consequently, the condition number and iteration count may grow rapidly with the contrast, and the iterative solver may stagnate for sufficiently complicated coefficient fields. This observation has motivated the replacement of polynomial coarse spaces by nonstandard, heterogeneity-aware spaces constructed from MsFEM basis functions, GMsFEM eigenfunctions and local spectral modes
\cite{GrahamScheichl2007,GalvisEfendiev2010a,GalvisEfendiev2010b, YangFuChung2019,ye2024robust}. With a suitable selection of local spectral modes, two-level domain decomposition preconditioners can attain condition-number bounds that are independent of the coefficient contrast and unresolved spatial scales. The majority of the available coarse-space constructions and analyses, however, concern stationary elliptic operators or steady incompressible flow systems. The systematic construction of reduced multiscale coarse spaces for the fully implicit linear systems arising from time-dependent high-contrast problems
is comparatively less developed.

In this paper, we study two-level overlapping preconditioners for
time-dependent multiscale flow and heat-transfer problems which has the form
\begin{equation}
\frac{\partial u}{\partial t}
  - \nabla\cdot\bigl(\kappa \nabla u\bigr)
  = f,
  \label{eq:transient-model}
\end{equation}
where \(\kappa\) denotes a high-contrast permeability or thermal conductivity. After a fully
implicit time discretization, each time step requires the solution of a system
\begin{equation}
  \widetilde{A}u^{n+1}=b^{n+1},
  \qquad
  \widetilde{A}=M+\Delta t\,A,
  \label{eq:fully-implicit-system}
\end{equation}
where \(M\) and \(A\) are the fine-grid mass and stiffness matrices, respectively. We aim to construct a two-level overlapping additive preconditioner such that the prolongation operator is determined by a multiscale coarse space. The local corrections resolve fine-scale behavior within the overlapping subdomains, whereas the coarse correction captures the global components of the operator. The presence of the mass matrix makes it possible to use a substantially smaller coarse space than would be required for the stiffness matrix alone.

Our coarse-space construction is based on an NLMC-type decomposition. The auxiliary functions distinguish connected high-permeability or conductivity continua from the low-permeability background matrix. The resulting multiscale space is decomposed as \(V_H = V_{H,1}+V_{H,2}\), where \(V_{H,1}\) is generated by basis functions associated with high-permeability channels or fractures, while \(V_{H,2}\) represents the low-permeability matrix continua. The complete space \(V_H\) is suitable for preconditioning the heterogeneous stiffness operator \(A\). More importantly, \(V_{H,1}\) alone captures the contrast-dependent global modes generated by the high-permeability network.

The main observation of this work is that for a general choice of time step size, we can show $V_H = V_{H,1} + W_H$ is a good choice of coarse space to precondition the stiffness matrix, where $W_H$ can be \(V_{H,2}\) or other standard multiscale finite element space. When \(\Delta t \lesssim H^2 \), the mass matrix controls the low-permeability contribution of the stiffness matrix, the low-permeability component does not require an additional global coarse correction for the transient operator \(M+\Delta t\,A\). Consequently, the reduced choice \(V_{H,1}\) is sufficient for the two-level preconditioner. Thus, only the high-permeability component must be treated carefully by the global coarse solver. Thus when the volume of high permeability part in the coefficients is low, retaining only \(V_{H,1}\) reduces the dimension and computational cost of the global coarse problem while preserving robust convergence. This also indicates an important distinction between multiscale approximation and multiscale preconditioning. A basis function may be necessary for accurately representing every coarse-scale continuum in a direct reduced model, but it need not be included in a preconditioner when the remaining part of the discrete operator already provides sufficient control.

The layout of the paper is as follows. In Sec.~\ref{sec:relaxed_NLCM}, we will introduce the coarse space $V_0$ based on relaxed constraint energy minimization problem. In Sec.~\ref{sec:iterative_construction}, we will discuss iterative method for the coarse space construction. In Sec.~\ref{sec:preconditioner}, we will introduce a reduced dimension coarse space for the time dependent problems and analyze the proposed preconditioner. In Sec.~\ref{sec:numerical}, we will present some numerical examples to demonstrate the performance of the proposed preconditioning methods.

\section{Preliminary}
We will focus on the multiscale diffusion equation, and the methods and algorithms discussed in this paper can be applied to various PDEs.

Given a domain $\Omega \subset \mathbb{R}^d$ ($d=2,3$) and a source function $f\in L^2(\Omega)$, we seek the solution $u: \Omega \times (0,T] \to \mathbb{R}$ of the equation
\[
\cfrac{\partial u}{\partial t} -\nabla\cdot\left( \kappa \nabla u \right) = f \quad \text{in } \Omega,
\]
with zero Dirichlet boundary condition $u = 0$ on $\partial \Omega \times [0,T]$, and initial condition $u|_{\Omega\times \{0\}}=u_0$ with $u_0:\Omega\to \mathbb{R}$. The permeability field $\kappa$ is the permeability coefficient defined on the domain $\Omega$. Here, we consider $\kappa(x) \in L^{\infty}(\Omega)$ and has the high contrast property, that is, $0<\kappa_{\text{min}}\leq\kappa(x)\leq\kappa_{\text{max}}$
with the contrast ratio $\eta :=\cfrac{\kappa_{\text{max}}}{\kappa_{\text{min}}} \gg 1$. 

\subsection{The fully discretized scheme}
Let $\mathcal{T}^h$ denote a partition of $\Omega$ with mesh size $h$, and let $V_h = P_1(\mathcal{T}^h) \subset H_1(\Omega)$ be the finite element space consisting of piecewise linear functions, and $V_{h,0}=\{v\in V_h, \; v=0 \text{ on } \partial \Omega \}$. We note that both the mesh generation and the choice of fine-grid finite element space remain flexible in our proposed framework.

We then introduce a semi-discretization of the problem, that is to find $u_{h}(t,\cdot)\in V_{h,0}$,
such that 
\[
(\partial_{t}u_{h},v) + a(u_{h},v)=(f,v), \quad\forall v\in V_{h,0},
\]
where $a(u,v) : = \int_{\Omega} \kappa \nabla u\cdot \nabla v\ dx.$

Since the permeability $\kappa$ can be large in the high contrast case, we will use implicit time discretization scheme to ensure the stability of the discrete system. In this work, we consider the implicit Euler scheme, and the proposed method can be extended to other time discretization schemes. The fully discretized scheme is to find $u^{n+1}\in V_{h,0}$
such that 
\[
(u^{n+1},v)+\Delta t a(u^{n+1},v)=(u^{n},v)+\Delta t 
 (f,v), \quad\forall v\in V_{h,0}.
\] 

Denote by $A$ and $M$ the fine grid stiffness and mass matrix, respectively. In a matrix form, we need to solve 
$$(\Delta t A+M)U^{n+1}=MU^n+F,$$
where $U^n$ is the coefficient vector of $u^n \in V_h$. 

It is in general challenging to solve such system with matrix $\Delta t A+M$ since the condition number of $A$ is proportional to $\eta$ and $h^{-2}$, where $h$ refers to the fine mesh size and $\eta$ is the contrast ratio. Using an iterative method to solve the large-scale system of equations can be highly efficient, as it typically requires only matrix--vector multiplications and avoids the prohibitive computational cost and memory consumption associated with direct factorization. However, the convergence rate of such methods is often strongly influenced by the spectral properties of the system matrix. Preconditioning therefore serves as an essential technique to control the condition number and improve the spectral distribution, thereby accelerating convergence and ensuring robustness. In this work, we aim to construct preconditioners that allow us to balance computational efficiency with implementation complexity for this time-dependent high-contrast problem.

\subsection{Two-level overlapping preconditioner}
To tackle the aforementioned problem, we focus on the \textit{two-level overlapping domain decomposition methods} and utilize local information to construct a minimal dimensional coarse space to obtain an overlapping domain decomposition preconditioners such that the condition number of the perconditioned system is \textit{contrast-independent}. 


 Let $\mathcal{T}^H$ be a partition of $\Omega$ with mesh size $H$, and $K_i (i=1,\cdots, N)$ are coarse elements.  We consider $\Omega=\cup_{i}\omega_{i}$ where $\omega_{i}$ are overlapping local domains by extending layers of fine elements to each coarse element $K_i$, and $V_{i}=V_{h,0}(\omega_{i})$. Therefore, in the additive two-level overlapping Schwarz method \cite{toselli2004domain}, the finite element space is decomposed as 
 $$V_{h,0}= R_0^T V_0+\sum_{i=1}^{N} R_i^TV_i,$$
 where $V_0$ is a global coarse space defined on $\mathcal{T}^H$, and $R_i^T: V_i \to V_{h,0}, i=0,\cdots, N$ are some interpolation operators. The decomposition of the space plays an essential role in the development and analysis of the method.

Once the coarse space $V_0$ is given, we can obtain the matrix of the global solver $\tilde{A}_0 = R_0 (\Delta tA+M) R_0^T$, where $A$ and $M$ are the fine scale stiffness and mass matrix. Denote by $\tilde{A}_i=(\Delta tA_i+M_i)$ the matrix of local solvers, and $\tilde{A}_{\text{pre}}^{-1}$ the preconditioner to the system. We note that the coarse space $V_0$, the local spaces $V_i$ and the corresponding basis matrices can be constructed in an offline manner. Then $\tilde{A}_{\text{pre}}^{-1}$ has the form 
\[
\tilde{A}_{\text{pre}}^{-1}= R_0^T \tilde{A}_0^{(-1)} R_0  + \sum_{i=1}^N
R_i^T \tilde{A}_i^{(-1)} R_i.  \] 
With this preconditioner, one can iteratively solve the global system in an efficient way, $$\tilde{A}_{\text{pre}}^{-1} (\Delta tA+M) U^{n+1} = \tilde{A}_{\text{pre}}^{-1}  (MU^n+f).$$
 
 The effectiveness of this approach hinges on constructing a preconditioner such that the condition number of $\tilde{A}_{\text{pre}}^{-1} (\Delta tA+M)$ is small and independent of physical parameters $\eta$.

For high-contrast multiscale diffusion problems, classical coarse spaces, such as the piecewise linear finite element space $V_0 = P_1(\mathcal{T}^H)$ on a coarse grid, often fail to capture the essential variability of the permeability coefficient $\kappa$. When $\kappa$ contains high-conductivity channels, the standard coarse interpolation cannot adequately represent the multiscale features. Consequently, the condition number of the resulting preconditioned system typically scales with the contrast $\eta$, rendering classical iterative solvers inefficient.

To address this fundamental challenge, various advanced methodologies have been developed to construct more robust coarse spaces. These broadly include domain decomposition type methods and multiscale/multigrid type approaches. A class of these methods constructs the coarse space $V_0$ based on the generalized multiscale finite element method (GMsFEM). In this approach, local eigenvalue problems formulated via Rayleigh quotients over snapshot spaces are solved to extract the important modes, particularly those associated with high-conductivity channels, yielding preconditioners whose condition numbers are independent of the contrast.
In another work, the constraint energy minimizing generalized multiscale finite element method (CEM-GMsFEM) constructs the coarse space by solving a constrained energy minimization problem \cite{GalvisChungEfendievLeung2018}. By enforcing orthogonality to an appropriate auxiliary space and utilizing oversampling techniques with sufficiently large overlap, this approach can produce preconditioner with a condition number close to $1$. However, the construction requires solving local problems on enlarged oversampled regions, resulting in a higher computational cost.

 Based on the constraint energy minimizing (CEM) and nonlocal multicontinnum (NLMC) method, in this paper, we will introduce an iterative approach to improve the efficiency of constructing robust coarse space $V_0$ at the offline stage. To further improve efficiency of the proposed preconditioner, we will show that the low-dimensional dominant component of the coarse space can be used to reduce computational complexity. The details will be shown in latter sections.

\section{The construction of coarse space}  
\subsection{Relaxed NLMC Space}\label{sec:relaxed_NLCM}

In this section, we discuss the construction of the Nonlocal Multiscale Continuum (NLMC) space $V_H$ by solving a relaxed constraint energy minimization problem. Motivated by the CEM-GMsFEM, the standard construction of NLMC space enforces the multiscale basis functions to be strictly orthogonal to a local auxiliary space $V_{\text{aux}}$ spanned by eigenfunctions corresponding to small eigenvalues. This guarantees a mesh-dependent convergence that is independent of the contrast when using sufficiently large oversampling domains. Instead of solving the constrained energy minimization problem, one can adopt the relaxed formulation that replaces the hard orthogonality constraints with a penalized energy functional. 
 
Specifically, we first define the auxilliary space. For each coarse element $K$, let $K_{h}^{i}\subset K$ be the $i$-th non-overlapping open connected lipschitz domain and $K_{l}=K\backslash(\cup_{i}K_{h}^{i})$. Let $\eta_{0},\kappa_{0}$ be constants such that 
\begin{align*}
    \cfrac{\max_{x\in K_{h}^{i}}\kappa(x)}{\min_{x\in K_{h}^{i}}\kappa(x)}\leq\eta_{0},\quad 
    \min_{x\in K_{h}^{i}}\kappa(x)\geq\kappa_{0},\quad 
    \max_{x\in K_{l}}\kappa(x)<\kappa_{0},
\end{align*}
and there exists a constant $C$ independent of $H$ and $K$ such that 
\[
\int_{K_{h}^{i}}|u|^{2}\leq CH^2\int_{K_{h}^{i}}|\nabla u|^{2}
\quad\text{for all }u\text{ with }\int_{K_{h}^{i}}u=0.
\]

For later analysis, define $\|u\|_a^2:=\int_{\Omega}\kappa|\nabla u|^2$, and $s(\,u,v\,):=H^{-2}\int_{\Omega} \kappa uv$, $\|u\|_s^2:= \int_{\Omega} \kappa u^2$. The corresponding local forms are defined by restricting the same integral to the indicated subdomain.

For each $K\in\mathcal{T}_{H}$, we consider $V_{\text{aux}}^K=V_{\text{aux},1}^K+V_{\text{aux},2}^K$ where $V_{\text{aux},1}^K=\text{span}\{\chi_{K_{h}^{i}}|K_h^i \subset K\}$,
$V_{\text{aux},2}^K=\text{span}\{\chi_{K_{l}}|K_{l}=K\backslash(\cup_{i}K_{h}^{i})\}$, where $\chi$ is the characteristic function. Then the global auxillary space is $V_{\text{aux}}:=\oplus_{K\in\mathcal{T}_{H}} V_{\text{aux}}^K$.

Let $\pi_{K}:H^{1}(K)\to L^{2}(K)$ be the projection operator, satisfying
\begin{equation} \label{eq:poincare_1}
    \|(I-\pi_{K})u\|_{s(K)}^{2}\leq C_{\pi}\|u\|_{a (K)}^{2}
\end{equation}
for all $u\in H^1(K)$, where $C_\pi$ is independent of $H$ and of the contrast. The global projection operator is $\pi: H^1 (\Omega) \to L^2(\Omega)$ with $\pi(u):=\sum_{K}\pi_{K}(u|_{K})$. 

Next, we will solve an unconstrained minimization problem to obatain the relaxed version of NLMC basis. For a given basis $\psi_{j,K}\in V_{\text{aux},j}, \; j=1,2$, the corresponding NLMC basis function $\phi_{j,K}$ can be obtained by solving 
\begin{equation*}
    \phi_{j,K} = \operatorname*{arg\,min}_{\phi \in V_{h,0}} 
\left\{ a(\phi,\phi) + s\big(\pi(\phi) - \psi_{j,K},\, 
\pi(\phi) - \psi_j^{(i)}\big) \right\}.
\end{equation*}
The penalty term $s\big(\pi(\phi) - \psi_{j,K},\, 
\pi(\phi) - \psi_j^{(i)}\big)$ enforces approximate orthogonality to the auxiliary modes rather than exact orthogonality, thereby allowing the basis functions to be computed on smaller oversampled regions while retain the approximation power \cite{ChungEfendievLeung2018CEM}. 
Equivalently, the minimizer satisfies the variational equation
\begin{equation}
\label{eq:NLMC_relax}
    a_(\phi_{j,K},v)+s(\pi(\phi_{j,K}),\pi(v))=s(\psi_{j,K},\pi(v)),\quad\forall v\in V_{h,0}.
\end{equation}

Then the NLMC space can be defined as $V_{H}:=\text{Span}_{j,K}\{\phi_{j,K}\}$, and the coarse space $V_{H}$ can be natually decomposed into $V_{H,1}$ and $V_{H,2}$. We will show that the subspace $V_{H,1}$ can capture the ``fast" or the dominant component of the solution, therefore serving as an important role in our construction of the preconditioner.

\subsection{Iterative construction of $V_{H}$}\label{sec:iterative_construction}
In this section, we will discuss an iterative construction of the coarse space $V_H$. Let $v_{h,j}$ be the nodal basis function of the finite element space $V_{h,0}$, then the global stiffness matrix $\textbf{A}$ and projected s-norm matrix $\textbf{S}$ are defined as 
\[
A_{ji}=a(v_{h,i},v_{h,j}),S_{ji}=s(\pi v_{h,i},\pi v_{h,j}).
\]

In this section, we let
\[
V_{h,0}=\sum_{i} R_i^TV_{i}.
\]
Then we denote by $\textbf{A}_{i}$ and $\textbf{S}_i$ the corresponding stiffness matrix and projected s-norm matrix of $V_i$. 

Let $\{\chi_i\}_i$ be a partition of unity satisfying the following conditions: $\chi_i\in V_h$ is supported in $\omega_i$ with $\sum_i\chi_i =1$, $|\nabla \chi_i|\leq CH^{-1}$ and $\sum_i\chi^2_i\leq 1$. Let $\phi^{\text{glo}}_{j,K}$ be the global basis function obtained from the relaxed NLMC approach, that is,
$$(\textbf{A}+\textbf{S})\phi^{\text{glo}}_{j,K} =\tilde{f}_{j}.$$
where $(\tilde{f}_{j})_{k}=s(\pi\psi_{j,K},\pi v_{k})$.

Define $\textbf{A}^{-1}_{S,loc}=\sum_{i=1}R^T_{i}(\textbf{A}_{i}+\textbf{S}_{i})^{-1}R_{i}$,
we can prove that the conditional number of $\textbf{A}^{-1}_{S,loc}(\textbf{A}+\textbf{S})$ is in the following theorem. 
\begin{thm} Assume that $V_{\text{aux}}$ satisfies \eqref{eq:poincare_1}. Then
\[
\text{cond}\Big(\textbf{A}^{-1}_{S,loc}(\textbf{A}+\textbf{S})\Big)=O(1)
\]
which is independent of contrast, the fine mesh size $h$ and the coarse mesh size $H$.

\end{thm}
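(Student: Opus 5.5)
We use the abstract additive Schwarz framework for the one-level preconditioner $\textbf{A}^{-1}_{S,loc}$ applied to the bilinear form $b(u,v):=a(u,v)+s(\pi u,\pi v)$ on $V_{h,0}$. The upper bound on the spectrum is the standard finite-overlap coloring argument: every point lies in at most $N_c$ subdomains $\omega_i$, so $\lambda_{\max}\le N_c$. The real work is the lower bound, i.e.\ a stable decomposition. For every $u\in V_{h,0}$ we must produce $u_i\in V_i$ with $u=\sum_i R_i^T u_i$ and
\[
\sum_i b(u_i,u_i)\le C_0\, b(u,u),
\]
with $C_0$ independent of $\eta$, $h$ and $H$. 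We then get $\lambda_{\min}\ge C_0^{-1}$, and hence $\text{cond}=O(1)$.

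The decomposition is the partition-of-unity splitting $u_i=I_h(\chi_i u)$. The interpolant $I_h$ is not needed if the product is understood nodewise, and then $\sum_i u_i=u$ because $\sum_i\chi_i=1$. We bound the energy term using $\nabla(\chi_i u)=\chi_i\nabla u+u\nabla\chi_i$ together with $|\nabla\chi_i|\le CH^{-1}$ and $\sum_i\chi_i^2\le 1$:
\[
\sum_i a(\chi_i u,\chi_i u)\le 2a(u,u)+CH^{-2}\int_\Omega\kappa u^2 .
\]
This splitting is the crucial point. The term $H^{-2}\int\kappa u^2$ is exactly $\|u\|^2_{s}$ scaled by $H^{-2}$, that is, the $s$-form evaluated at $u$ rather than at $\pi u$. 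So we split $u=\pi u+(I-\pi)u$ elementwise on each coarse element $K$.
\begin{itemize}
\item The $\pi u$ part is controlled directly by $s(\pi u,\pi u)$.
\item The remainder is controlled by \eqref{eq:poincare_1}, which gives $H^{-2}\|(I-\pi_K)u\|^2_{s(K)}\le C_\pi\|u\|^2_{a(K)}$, with the scaling convention of $s$.
\end{itemize}
Summing over $K$ gives $H^{-2}\int\kappa u^2\le 2C_\pi a(u,u)+2s(\pi u,\pi u)$, with constants free of the contrast.

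We next bound the penalty term $\sum_i s(\pi(\chi_i u),\pi(\chi_i u))$. The expected obstacle is that $\pi$ does not commute with multiplication by $\chi_i$. We avoid this by using only $L^2_\kappa$-stability of $\pi_K$. Since $\pi_K$ is a weighted $L^2$ projection onto piecewise constants on $K_h^i$ and $K_l$ (in the $\kappa$-weighted inner product, as for $V_{\text{aux}}$), we have $\|\pi_K w\|_{s(K)}\le\|w\|_{s(K)}$. Therefore
\[
\sum_i s(\pi(\chi_i u),\pi(\chi_i u))\le H^{-2}\sum_i\int\kappa\chi_i^2u^2\le H^{-2}\int\kappa u^2 ,
\]
and the right-hand side is bounded by the estimate of the previous step. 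Combining the two bounds gives $C_0=C(1+C_\pi)$. This constant is independent of $\eta$, because every step uses $\kappa$-weighted quantities consistently, and independent of $h$ and $H$, because only the $H^{-1}$ gradient bound on $\chi_i$ and the $H^{-2}$ weight of $s$ enter, and these balance.

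The main subtlety to check is this projection-stability step. If $\pi_K$ is not exactly the $\kappa$-weighted projection, we will instead use the triangle inequality together with \eqref{eq:poincare_1} applied to $\chi_i u$. This incurs an $a(\chi_i u,\chi_i u)$ term, which is already bounded above, so the constant changes but the argument still closes. A secondary technical point is the nodal interpolation $I_h(\chi_i u)$ on the fine grid. It is handled by standard inverse and stability estimates elementwise, and because $\kappa$ is taken piecewise constant on fine elements these estimates are contrast-robust.
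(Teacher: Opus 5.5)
Your proof is correct and follows essentially the paper's argument: the splitting $u_i=I_h(\chi_i u)$, control of $H^{-2}\int_\Omega\kappa u^2$ by $b(u,u)$ through $u=\pi u+(I-\pi)u$ and \eqref{eq:poincare_1}, weighted $L^2$-stability of $\pi$ for the penalty term, and bounded overlap for the upper eigenvalue bound. The one imprecision is $\lambda_{\max}\le N_c$. Because $\pi$ acts on whole coarse elements, $s(\pi v_i,\pi v_j)$ can be nonzero even when $\omega_i\cap\omega_j=\emptyset$, so you should instead count the (still bounded) number of $\omega_i$ meeting a common coarse element, or argue as the paper does via $\|\pi w\|_s\le\|w\|_s$, overlap of the $\omega_i$, and $\|v\|_s^2\lesssim b(v,v)$.
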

\begin{proof}
Summing \eqref{eq:poincare_1} over all coarse elements, we can have
\begin{equation}
    \|(I-\pi)u\|_{s}^{2}
    \leq
    C_{\pi}\|u\|_{a}^{2}.
    \label{eq:global-poincare}
\end{equation}
Define $b(u,v)=a(u,v)+s(\pi u,\pi v)$ and
$\|u\|_{b}^{2}:=b(u,u)
    =\|u\|_{a}^{2}+\|\pi u\|_{s}^{2}$. Then
\begin{equation}    \label{eq:mass-controlled-by-b}
\begin{aligned}
    \|u\|_{s}^{2}
    &\leq
    2\|\pi u\|_{s}^{2}
    +
    2\|(I-\pi)u\|_{s}^{2} \leq
    2\|\pi u\|_{s}^{2}
    +
    2C_\pi \|u\|_{a}^{2} \leq C\|u\|_{b}^{2}.
\end{aligned}    
\end{equation}

The constant in \eqref{eq:mass-controlled-by-b} is independent of $H$ and of the contrast. 

%
For any $u\in V_h$, define $u_i:=I_h(\chi_i u)\in V_i$, where $I_h$ is the nodal interpolant satisfying $\|I_h(v)\|_a\leq C_I\|v\|_a$ and $\|I_h(v)\|_s\leq C_I\|v\|_s$ for all $v\in C^0 \cap P_2(\mathcal T^h)$, and $\chi_i$ is the partition of unity. Since $I_h$ is linear and is the identity on $V_h$, $\sum_i u_i=I_h(\sum_i \chi_i u ) = I_h(u)=u$. Note that $\nabla(\chi_i u) = \chi_i\nabla u+u\nabla\chi_i$; by the stability of $I_h$ and \eqref{eq:mass-controlled-by-b}, we have
\begin{equation}\label{eq:mass-controlled}
   \begin{aligned}
  &  \sum_i \|I_h(\chi_i u)\|_{a(\omega_i)}^{2} \leq C_I^2\sum_i \|\chi_i u\|_{a(\omega_i)}^{2}\\
    &\leq
    C\sum_i\int_{\omega_i}\kappa \chi_i^2|\nabla u|^2\,dx
    +
    C\sum_i\int_{\omega_i}\kappa u^2|\nabla\chi_i|^2\,dx \\
    &\leq
    C\|u\|_{a}^{2}
    +
    C H^{-2}\int_{\Omega}\kappa u^2\,dx \leq C\|u\|_{b}^{2}
\end{aligned} 
\end{equation}

Moreover, using the property of $\pi$, we have
\begin{equation}\label{eq:stable-decomp-s}
\begin{aligned}
    \sum_i \|\pi(I_h(\chi_i u))\|_{s(\omega_i)}^{2}
    &\leq
    C\sum_i \|I_h(\chi_i u)\|_{s(\omega_i)}^{2} \leq
    C\|u\|_{s}^{2}\leq C\|u\|_{b}^{2}.
\end{aligned}
\end{equation}

Combining \eqref{eq:mass-controlled} and \eqref{eq:stable-decomp-s} yields the stable decomposition
\begin{equation}
    \sum_i b(u_i,u_i)
    \leq
    C_{\mathrm{sd}}\,b(u,u).
    \label{eq:stable-decomposition}
\end{equation}

On the other hand, since $u=\sum_i I_h(\chi_i u)=\sum_i u_i$ and the supports of the $u_i$ have uniformly bounded overlap,
\begin{equation}
   \left\| u\right\|_{a}^{2} = \left\|\sum_i I_h(\chi_i u)\right\|_{a}^{2}
    \leq
    C_{\mathrm{ov}}
    \sum_i \| I_h(\chi_i u)\|_{a}^{2}
    =C_{\mathrm{ov}}\sum_i\|u_i\|_a^2.
    \label{eq:bounded-overlap}
\end{equation}
Similarly, 
\begin{equation}
   \left\| \pi u\right\|_{s}^{2} \leq \left\| u\right\|_{s}^{2}= \left\|\sum_i I_h(\chi_i u)\right\|_{s}^{2}
    \leq
    C_{\mathrm{ov}}
    \sum_i \| I_h(\chi_i u)\|_{s}^{2} \leq  C_{\mathrm{ov}}
    \sum_i \| u_i\|_{b}^{2}.
    \label{eq:bounded-overlap2}
\end{equation}
Combining \eqref{eq:bounded-overlap} and \eqref{eq:bounded-overlap2}, we have 
\begin{equation}
b(u,u)  
    \leq
    C_{\mathrm{ov}}\,\sum_i b(u_i,u_i).
\end{equation}

Consequently,
\begin{equation*}
\begin{aligned}
    \text{cond}\Big(\textbf{A}^{-1}_{S,loc}(\textbf{A}+\textbf{S})\Big)
    &=
    \frac{
    \lambda_{\max}\Big(\textbf{A}^{-1}_{S,loc}(\textbf{A}+\textbf{S})\Big)
    }{
    \lambda_{\min}\Big(\textbf{A}^{-1}_{S,loc}(\textbf{A}+\textbf{S})\Big)
    } \leq
    C_{\mathrm{sd}}C_{\mathrm{ov}}.
\end{aligned}
\end{equation*}
Since $C_{\mathrm{sd}}$ and $C_{\mathrm{ov}}$ are independent of the contrast, the fine mesh size $h$ and of the coarse mesh size $H$. This completes the proof.

\end{proof}

With the above result, one can use the preconditioned conjugate gradient (PCG) method to solve $\phi_{j,K}$ iteratively. Let $\widehat\phi_{j,K}^{(m)}$ be the functions obtained after $m$ iterations of the basis solver and set $\widehat V_{H}^{(m)}:=\operatorname{range}(\widehat\phi_{j,K}^{(m)}), \;j=1,2$. Since $\tilde{A}=\Delta tA+M$ is symmetric positive definite, it induces the norm
\[
 \|v\|_{\tilde a}^{2}:=\tilde a(v,v)=\Delta t\,a(v,v)+(v,v).
\]
We further define $\tnorm {v}^{2}
 :=\|v\|_{\widetilde a}^2+\Delta t \|v\|_s^2$, and let $\mathcal P_m^{\mathrm{loc}}:V_{h,0}\to\widehat V_{H,1}^{(m)}$ be the
orthogonal projection in the inner product induced by
$\tnorm{\cdot}$, and set $T_m:=\mathcal P_m|_{V_{H,1}}$. We make the following assumption.

\begin{assumption}\label{ass:inexact_basis}
The coarse space $V_{H}$ and computed coarse space $\widehat V_{H}^{(m)}$ have the same dimension, and there is a number $0\leq\varepsilon_m\leq\varepsilon_*<1$ such that
\begin{equation}
 \label{eq:inexact-basis-error}
 \tnorm{v_0-T_mv_0}
 \leq \varepsilon_m\|v_0\|_{\widetilde a},
 \qquad v_0\in V_{H,1}.
\end{equation}
The threshold $\varepsilon_*$ is independent of $h$, $H$, and the coefficient contrast.
\end{assumption}
Due to the exponential convergence property, we note that when $m$ is in order of $\log(\eta)$, the assumption can be satisfied \cite{ChungEfendievLeung2018CEM}.

\section{Fast scale preconditioner}\label{sec:preconditioner}
In this section, we discuss the construction of efficient preconditioners based on the coarse space defined in the previous section. 

Let $\pi_j$ denote the $s$-orthogonal projection onto
$V_{\mathrm{aux},j}$, so that $\pi=\pi_1+\pi_2$. We use the NLMC
projection $P_1:V_{h,0}\to V_{H,1}$ characterized by
\begin{align}
&s(\pi_1P_1u,\pi_1v)=s(\pi_1u,\pi_1v),
v\in V_{H,1}, \label{eq:P1-moment-projection}
\end{align}
Therefore, we have 
\begin{align}
a(P_{1}u,P_{1}u)+s(\pi P_{1}(u),\pi P_{1}u)=a(P_{1}u,u)+s(\pi P_{1}(u),\pi u).
\label{eq:P1-energy-identity}
\end{align}
Since $\pi_1(V_{H,1})=V_{\mathrm{aux},1}$, \eqref{eq:P1-moment-projection} implies
\begin{equation}\label{eq:P1-moments}
 \int_{K_h^j}(u-P_1u)\,dx=0
 \qquad\text{for every high-conductivity component }K_h^j.
\end{equation}

In this work, we will propose two different ways to construct coarse spaces.
\begin{itemize}
\item \textbf{Case 1}: For $\Delta t\sim \mathcal{O} (H^{2})$, we can consider the coarse space $V_{0}$ to be simply $V_{H,1}$.
\item \textbf{Case 2}: In a general case, we can consider the coarse space $V_{0}$ to be
$V_{H,1}+W_{H}$ where $W_{H}$ can be standard coarse grid finite element
space.
\end{itemize}

We remark that for the first case, given a small step size $\Delta t$, it is enough to use a preconditioner which only handles the high permeability part. For the domain with small high parameter region, this method can significantly reduce the space dimension of the coarse space. On the other hand, recall that the partially explicit scheme\cite{splitting1, splitting2, splitting-multirate} was previously proposed to handle multiscale parabolic problems. In that approach, one first decomposes the multiscale space $V_{H}$ into $V_{H,1}$ and $V_{H,2}$, then use implicit scheme for the part of solution in $V_{H,1}$ and explicit scheme the part of solution in $V_{H,2}$. It was proved that the partially explicit scheme is stable given time step size in $O(H^{2})$ and is independent of contrast \cite{splitting1, splitting2}. In Case 1, $V_{0}$ is taken as $V_{H,1}$. Thus the coarse scale matrix $\tilde{A}_0 = \Delta t A_0+ M_0$ only corresponds to the fast flow part. Consequently, Case 1 is can be viewed as using a coarse grid partially explicit scheme as a preconditioner to the fine grid fully implicit scheme.

For the second case, we take $V_0 = V_{H,1}+W_H$. Here, $W_H$ can be the finite element space or multiscale finite element space on the coarse grid. Then we have an enriched coarse space and can take larger time step sizes. Since $W_H$ is easy to obtain, the computational cost can be substantially reduced.

Once the coarse space $V_0$ is constructed, the preconditioner $\tilde{A}_{\text{pre}}$ is then defined as $\tilde{A}_{\text{pre}}=(R_{0}^{T}(\Delta tA_0+M_0)^{-1}R_{0} + \sum_{i=1}^N R_{i}^{T}(\Delta tA_{i}+M_{i})^{-1}R_{i})^{-1}$. Here, $A_0$ and $M_0$ are coarse scale stiffness and mass matrices, $A_{i}$ and $M_{i}$ ($i=1,\cdots, N$) are the stiffness matrix and mass matrix corresponding to the local finite element space $V_{i}$ ($i=1,\cdots, N$).

\subsection{The condition number for Case 1}
 In the following, we will show that the condition number of proposed preconditioner $\tilde{A}_{\text{pre}}$ estimation for \textbf{Case 1}. We start with two useful lemmas.

\begin{lem}\label{lem1}
Consider the coarse space $V_{0}=V_{H,1}$. We have 
\[
H^{-2}\int_{\Omega}\kappa\Big|\Big((I-P_{1})(u)\Big)\Big|^{2}\lesssim\eta_{0}\|u-P_{1}(u)\|_{a}^{2}+H^{-2}\kappa_{0}\|u-P_{1}(u)\|^{2}
\]
 for any $u\in V_{h}$. Here $P_{1}(u)\in V_{H,1}$ satisfies $\int_{K_{h}^{i}}u=\int_{K_{h}^{i}}P_{1}(u)$
for all $K_{h}^{i}$.
\end{lem}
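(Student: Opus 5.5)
Set $w:=u-P_1(u)$. The statement only uses the moment property \eqref{eq:P1-moments}, i.e.\ $\int_{K_h^i} w\,dx=0$ for every high-conductivity component $K_h^i$. So the plan is to prove, for any $w\in V_h$ with vanishing averages on all $K_h^i$,
\[
H^{-2}\int_\Omega \kappa |w|^2 \lesssim \eta_0\|w\|_a^2 + H^{-2}\kappa_0\|w\|^2 .
\]
This bound is purely local, so we split the integral coarse element by coarse element. Each $K$ is the disjoint union of the components $K_h^i$ and the background $K_l$.

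\textbf{Background region.} On $K_l$ we use the assumption $\max_{K_l}\kappa<\kappa_0$ directly:
\[
H^{-2}\int_{K_l}\kappa |w|^2 \le H^{-2}\kappa_0\int_{K_l}|w|^2 .
\]
Summing over $K$, this part is at most $H^{-2}\kappa_0\|w\|^2$, which is the second term on the right-hand side.

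\textbf{High-conductivity components.} On each $K_h^i$:
\begin{enumerate}
\item Bound $\kappa$ from above by $\max_{K_h^i}\kappa$.
\item Since $\int_{K_h^i}w=0$, apply the assumed Poincar\'e inequality on $K_h^i$:
\[
\int_{K_h^i}|w|^2\le CH^2\int_{K_h^i}|\nabla w|^2 .
\]
\item Convert back to the energy norm using $\max_{K_h^i}\kappa\le \eta_0\min_{K_h^i}\kappa\le \eta_0\,\kappa(x)$ for $x\in K_h^i$.
\end{enumerate}
Chaining these gives
\[
H^{-2}\int_{K_h^i}\kappa|w|^2 \le H^{-2}\Big(\max_{K_h^i}\kappa\Big)\, C H^2\int_{K_h^i}|\nabla w|^2 \le C\eta_0\int_{K_h^i}\kappa|\nabla w|^2 .
\]
The factors $H^{-2}$ and $H^2$ cancel, and the constant is independent of $H$ and of the contrast.

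\textbf{Summation.} Sum over all $i$ and all $K$. The $K_h^i$ and $K_l$ partition $\Omega$ up to a null set, so the two pieces give exactly $\eta_0\|w\|_a^2$ and $H^{-2}\kappa_0\|w\|^2$. This yields the claimed inequality.

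The main obstacle is checking that the moment condition holds componentwise on each $K_h^i$, not merely on each coarse element $K$. This is exactly what \eqref{eq:P1-moments} provides, because $\pi_1(V_{H,1})=V_{\text{aux},1}$ and $V_{\text{aux},1}$ contains each indicator $\chi_{K_h^i}$ separately. A second point to confirm is that the Poincar\'e constant on $K_h^i$ is uniform in $H$ and $K$, which is built into the standing assumption of Section~\ref{sec:relaxed_NLCM}. No control of $P_1$ beyond its moments is needed.
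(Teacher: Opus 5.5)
Your proof is correct and follows the paper's argument: you set $w=u-P_1u$, combine the zero mean of $w$ on each $K_h^i$ with the scaled Poincar\'e inequality and $\max_{K_h^i}\kappa\le\eta_0\min_{K_h^i}\kappa$ to get the $\eta_0\|w\|_a^2$ term, use $\kappa\le\kappa_0$ on $K_l$ for the $H^{-2}\kappa_0\|w\|^2$ term, and sum over coarse elements. Your explicit chain of inequalities on $K_h^i$ simply spells out what the paper summarizes as ``the scaled Poincar\'e inequality and the bounded variation of $\kappa$.''
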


\begin{proof}
Put $w=u-P_1u$, then on every high-conductivity component,
$\int_{K_h^i}w=0$ by \eqref{eq:P1-moments}. The scaled Poincar\'e inequality and the bounded
variation of $\kappa$ within $K_h^i$ therefore give
\[
 H^{-2}\int_{K_h^i}\kappa |w|^2
 \leq C\eta_0\int_{K_h^i}\kappa|\nabla w|^2.
\]
On the low-conductivity part, $\kappa\leq\kappa_0$, hence
\[
 H^{-2}\int_{K_l}\kappa |w|^2
 \leq H^{-2}\kappa_0\|w\|_{L^2(K_l)}^2.
\]
Summing both estimates over the coarse elements $K$ proves
\[
 H^{-2}\int_\Omega\kappa|w|^2
 \lesssim \eta_0\|w\|_a^2+H^{-2}\kappa_0\|w\|^2.
\]
\end{proof}

\begin{lem}\label{lem2}
Consider the coarse space $V_{0}=V_{H,1}$. We have 
\[
\sum_{i}\|I_h(\chi_{i}(u-P_{1}(u)))\|_{a}^{2}\lesssim (1+\eta_{0}) \|u-P_{1}(u)\|_{a}^{2}+H^{-2}\kappa_{0}\|u-P_{1}(u)\|^{2}
\]
and 
\[
\sum_{i}\|I_h(\chi_{i}(u-P_{1}(u)))\|^{2}\lesssim \|u-P_{1}(u)\|^{2}
\]
 for any $u\in V_{h}$.
\end{lem}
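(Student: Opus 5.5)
Set $w:=u-P_1(u)$. The plan is to repeat the stable-decomposition calculation \eqref{eq:mass-controlled} from the proof of the first theorem, with $w$ in place of $u$, and to use Lemma~\ref{lem1} for the weighted $L^2$ term that the partition-of-unity gradients produce. Two properties of the nodal interpolant $I_h$ are needed: stability on $C^0\cap P_2(\mathcal T^h)$ in the $a$-norm, and its $L^2$ analogue. Since $\chi_i\in V_h$ and $w\in V_h$, the product $\chi_i w$ lies in $C^0\cap P_2(\mathcal T^h)$, so both apply.

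\textbf{Energy estimate.} By stability of $I_h$ in the $a$-norm,
\[
\sum_i\|I_h(\chi_i w)\|_a^2\le C_I^2\sum_i\|\chi_i w\|_{a(\omega_i)}^2 .
\]
Writing $\nabla(\chi_i w)=\chi_i\nabla w+w\nabla\chi_i$, the first term is bounded by $2\sum_i\int\kappa\chi_i^2|\nabla w|^2\le 2\|w\|_a^2$, using $\sum_i\chi_i^2\le 1$. For the second term, $|\nabla\chi_i|\le CH^{-1}$ and the uniformly bounded overlap of the $\omega_i$ give
\[
\sum_i\int_{\omega_i}\kappa w^2|\nabla\chi_i|^2\le CH^{-2}\int_\Omega\kappa w^2 .
\]
Lemma~\ref{lem1} bounds the right-hand side by $C\eta_0\|w\|_a^2+CH^{-2}\kappa_0\|w\|^2$. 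Adding the two pieces yields the first inequality, with factor $(1+\eta_0)$.

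\textbf{$L^2$ estimate.} Local $L^2$-stability of $I_h$ on piecewise quadratics holds by norm equivalence on each fine element, i.e.\ the $s$-stability assumption without the weight $\kappa$. This gives $\|I_h(\chi_i w)\|^2\le C\|\chi_i w\|^2$, and summing with $\sum_i\chi_i^2\le1$ gives $\sum_i\|I_h(\chi_i w)\|^2\le C\|w\|^2$.

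\textbf{Main obstacle.} The only delicate point is the weighted mass term $H^{-2}\int\kappa w^2$. It cannot be bounded by $\|w\|_a^2$ alone, because only $V_{H,1}$ is available and so there are no moment conditions on the low-conductivity region $K_l$. Lemma~\ref{lem1} handles this: it uses the moment conditions \eqref{eq:P1-moments} on the high-conductivity components $K_h^i$ and the bound $\kappa\le\kappa_0$ on $K_l$. The resulting term $H^{-2}\kappa_0\|w\|^2$ is what will later be absorbed by the mass matrix when $\Delta t\lesssim H^2$.
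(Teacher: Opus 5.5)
Your proof is correct and follows the paper's argument essentially step for step. Both use $a$-norm stability of $I_h$ on $\chi_i w\in C^0\cap P_2(\mathcal T^h)$, then the product rule with $\sum_i\chi_i^2\le 1$, $|\nabla\chi_i|\lesssim H^{-1}$ and bounded overlap to reduce everything to $H^{-2}\int_\Omega\kappa|w|^2$, which Lemma~\ref{lem1} then controls; the second bound comes from $L^2$ stability of $I_h$ together with $\sum_i\chi_i^2\le 1$. Your elementwise norm-equivalence justification of the unweighted $L^2$ stability is a detail the paper simply asserts.
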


\begin{proof}
Put $w=u-P_1u$. By the $H^1$ stability of the nodal interpolant, the
product rule, $\sum_i\chi_i^2\leq1$, and
$|\nabla\chi_i|^2\lesssim H^{-2}$, we have
\begin{align*}
 \sum_i\|I_h(\chi_iw)\|_a^2
 &\lesssim\sum_i\|\chi_iw\|_a^2\lesssim H^{-2}\int_\Omega\kappa|w|^2+\|w\|_a^2.
\end{align*}
Using Lemma \ref{lem1}, we have 
\begin{align*}
\sum_i\|I_h(\chi_iw)\|_a^2
&\lesssim(1+\eta_{0})\|w\|_{a}^{2}+H^{-2}\kappa_{0}\|w\|^{2}.
\end{align*}

The $L^2$ stability of $I_h$ and the bounded overlap give
\begin{align*}
\sum_i\|I_h(\chi_iw)\|^2
&\lesssim\sum_i\|\chi_iw\|^2
=\int_\Omega\sum_i\chi_i^2|w|^2
\leq\|w\|^2.
\end{align*}
\end{proof}

\begin{lem}\label{lem:P1-transient-stability}
If $c_tH^2\leq\Delta t\leq C_tH^2$, then the NLMC projection satisfies
\[
 \|P_1u\|_{\widetilde a}\leq C_P\|u\|_{\widetilde a},
\]
with a constant independent of the contrast.
\end{lem}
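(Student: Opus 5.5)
The plan is to use the variational characterization of $P_1$ together with the scaling $\Delta t\sim H^2$. This scaling makes the weighted $s$-form and $\Delta t^{-1}$ times the $L^2$ norm comparable on the low-conductivity region. The argument has three steps: a $b$-stability bound for $P_1$, a transfer from the $b$-norm to $\|\cdot\|_{\widetilde a}$, and the control of the high-conductivity moment terms, which is the main obstacle.

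\textbf{Step 1 ($b$-stability).} Let $b(u,v)=a(u,v)+s(\pi u,\pi v)$ as in the proof of the first theorem. Applying Cauchy--Schwarz in $b$ to \eqref{eq:P1-energy-identity} gives $\|P_1u\|_b^2\le\|P_1u\|_b\|u\|_b$, hence
\[
\|P_1u\|_a^2+\|\pi P_1u\|_s^2\le\|u\|_a^2+\|\pi u\|_s^2 .
\]
Multiplying by $\Delta t$ bounds $\Delta t\|P_1u\|_a^2$. For the mass part, I would split
\[
\|P_1u\|^2\le 2\|\pi P_1u\|^2+2\|(I-\pi)P_1u\|^2 .
\]
On the low-conductivity set $K_l$ we have $\kappa\le\kappa_0$. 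Normalizing the background so that $\kappa\sim\kappa_0\sim1$ there, \eqref{eq:global-poincare} and $\Delta t\ge c_tH^2$ give
\[
\|(I-\pi)P_1u\|^2\lesssim H^2\|P_1u\|_a^2\lesssim\Delta t\|P_1u\|_a^2 .
\]
On the channels $K_h^i$ we have $\kappa\ge\kappa_0$, so the $L^2$ mass there is dominated by the $s$-mass. Together these give $\|P_1u\|_{\widetilde a}^2\lesssim\Delta t\,\|P_1u\|_b^2$.

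\textbf{Step 2 (transfer back to $u$).} By Step 1 it suffices to bound $\Delta t\|u\|_b^2=\Delta t\|u\|_a^2+\Delta t\|\pi u\|_s^2$ by $\|u\|_{\widetilde a}^2$. On $K_l$, $\pi_2u$ is an elementwise average, so
\[
\Delta t\,H^{-2}\int_{K_l}\kappa|\pi_2u|^2\le C_t\kappa_0\|u\|^2 ,
\]
which is harmless.

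\textbf{Step 3 (main obstacle: channel moments).} The remaining term is $\Delta t\,\|\pi_1u\|_s^2\sim\sum_{K,i}\int_{K_h^i}\kappa\,|\overline{u}_{K_h^i}|^2$. It carries the full contrast and is \emph{not} bounded by $\|u\|^2+\Delta t\|u\|_a^2$ in general. For example, $u\equiv1$ along a long channel has small gradient but large $\kappa$-weighted mass. The crude $b$-stability of Step 1 is therefore not enough, and I would instead use the moment structure of $P_1$.

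By \eqref{eq:P1-moments}, $P_1u$ is the $b$-minimal element of $V_{H,1}$ whose channel averages equal those of $u$. I would first try to compare it with an explicit competitor $z$ having the same moments. The natural choice is $z=\sum_{K,i}\overline{u}_{K_h^i}\,\theta_{K,i}$, where $\theta_{K,i}$ is a cutoff equal to $1$ on $K_h^i$ and decaying over a layer of width $\sim H$ in the background. I would then try to prove that
\begin{enumerate}[label=(\roman*)]
\item $\|P_1u\|_{\widetilde a}\lesssim\|z\|_{\widetilde a}$ (quasi-optimality of the NLMC basis in $\|\cdot\|_{\widetilde a}$, via the localization and exponential-decay estimates cited from \cite{ChungEfendievLeung2018CEM}), and
\item $\|z\|_{\widetilde a}\lesssim\|u\|_{\widetilde a}$, using the channel Poincar\'e inequality, the bounded variation $\eta_0$ of $\kappa$ on each $K_h^i$, and $\Delta t\le C_tH^2$.
\end{enumerate}
Item (ii) should follow because the cutoff gradients live only in the background, where $\kappa\le\kappa_0$, and $\Delta t\,H^{-2}\le C_t$.

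If (i) cannot be closed contrast-independently, the fallback is to prove the estimate with the stronger norm $\tnorm{\cdot}$ on the right-hand side. That is exactly the quantity appearing in Assumption~\ref{ass:inexact_basis}, and the resulting bound would feed the stable-decomposition argument of Lemma~\ref{lem2} in the same way.
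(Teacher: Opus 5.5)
There is a genuine gap. Step 3 is never closed, and the missing idea is a one-line cancellation. You correctly see that $\Delta t\,s(\pi_1u,\pi_1u)$ carries the contrast. You even record the relevant fact, that $P_1u$ is $b$-minimal among functions with the same channel moments, but you do not use it.

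Because $\pi_1P_1u=\pi_1u$, the $\pi_1$-part of $b$ is the same for every admissible function. So in \eqref{eq:P1-energy-identity} the terms $s(\pi_1P_1u,\pi_1P_1u)$ and $s(\pi_1P_1u,\pi_1u)$ coincide and cancel. Cauchy--Schwarz in the reduced form $a(\cdot,\cdot)+s(\pi_2\cdot,\pi_2\cdot)$ then gives
\[
a(P_1u,P_1u)+s(\pi_2P_1u,\pi_2P_1u)\le a(u,u)+s(\pi_2u,\pi_2u)\le a(u,u)+\kappa_0H^{-2}\|u\|^2 .
\]
Equivalently, take $z=u$ itself as your competitor; no channel-moment term ever appears.

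The contrast enters your argument in Step 1, where you bound the channel mass of $P_1u$ through the $\kappa$-weighted $s$-mass. Keep the unweighted norm instead. By \eqref{eq:global-poincare}, with your normalization $\kappa\ge1$,
$\|P_1u\|^2\lesssim H^2\bigl(a(P_1u,P_1u)+s(\pi_2P_1u,\pi_2P_1u)\bigr)+\|\pi_1P_1u\|^2$, and $\|\pi_1P_1u\|^2=\|\pi_1u\|^2\le\eta_0\|u\|^2$. Multiply the energy bound by $\Delta t\le C_tH^2$ and use $H^2\le c_t^{-1}\Delta t$. This is the paper's proof.

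Your replacement route does not prove the lemma as written. Item (i) is unproved, and exponential-decay estimates are the wrong tool: they control localization error of basis functions, not stability of the globally defined $P_1$. The quasi-optimality you need is exactly the minimality above, and once you have it, (ii) is unnecessary.

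The justification you give for (ii) also fails for connected channels. Pieces of the same channel in neighbouring coarse elements touch across the element interface. A cutoff equal to $1$ on $K_h^i$ and consistent with the neighbouring moment must therefore vary inside the high-conductivity region, where $\kappa$ is large.

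Finally, the fallback bound in terms of $\tnorm{u}$ is weaker than the statement. $\tnorm{u}^2$ contains $\Delta t\|u\|_s^2$, the full $\kappa$-weighted mass, so it is not bounded by $\|u\|_{\widetilde a}^2$ uniformly in the contrast. It would not yield the contrast-independent stable decomposition used in Theorem~\ref{thm:case1}.
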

\begin{proof}

Taking $v=P_1u$ in \eqref{eq:P1-moment-projection} cancels the
$\pi_1$ terms in \eqref{eq:P1-energy-identity}. Consequently,
\begin{align}
 &a(P_1u,P_1u)+\|\pi_2P_1u\|_s^2
 =a(P_1u,u)+s(\pi_2P_1u,\pi_2u)\notag\\
 &\leq \frac12\left(a(P_1u,P_1u)+\|\pi_2P_1u\|_s^2\right)
 +\frac12\left(a(u,u)+\|\pi_2u\|_s^2\right).
 \label{eq:P1-a-s-stability}
\end{align}
Thus
\begin{equation}\label{eq:P1-a-bound}
 a(P_1u,P_1u)+\|\pi_2P_1u\|_s^2
 \leq a(u,u)+\|\pi_2u\|_s^2
 \leq a(u,u)+\kappa_0 H^{-2}\|u\|^2.
\end{equation}

It remains to estimate the unweighted mass norm. By
\eqref{eq:global-poincare} and the decomposition $\pi=\pi_1+\pi_2$,
\begin{align*}
 \|P_1u\|^2
 &\lesssim H^2\left(a(P_1u,P_1u)+\|\pi_2P_1u\|_s^2\right)
 +\|\pi_1P_1u\|^2.
\end{align*}
Identity \eqref{eq:P1-moment-projection} and
$\pi_1(V_{H,1})=V_{\mathrm{aux},1}$ imply
$\pi_1P_1u=\pi_1u$. 
Using \eqref{eq:P1-a-bound}, we obtain
\begin{equation}\label{eq:P1-L2-bound}
 \|P_1u\|^2
 \lesssim H^2a(u,u)+(1+\kappa_0)\|u\|^2.
\end{equation}
Multiplying \eqref{eq:P1-a-bound} by $\Delta t$ and combining it with
\eqref{eq:P1-L2-bound}, with the condition $c_tH^2\leq\Delta t\leq C_tH^2$, we get the desired result.
\end{proof}

Finally, we have the following theorem.
\begin{thm}\label{thm:case1}
Consider the coarse space $V_{0}=V_{H,1}$. If
$c_tH^2\leq\Delta t\leq C_tH^2$ and $\eta_0$ and $\kappa_0$ are
uniformly bounded, then
we have $\text{cond}\Big((R_{0}^{T}\tilde{A}_{0}^{-1}R_{0}+\sum_{i}R_{i}^{T}\tilde{A}_{i}^{-1}R_{i})\tilde{A}\Big)=O(1)$
with $\tilde{A}=\Delta tA+M$, $\tilde{A}_{i}=\Delta tA_{i}+M_{i}$.
\end{thm}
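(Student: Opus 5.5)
We follow the standard abstract Schwarz framework, as in the proof of the first theorem: we need a lower bound and an upper bound on the spectrum of $\tilde{A}_{\text{pre}}^{-1}\tilde{A}$. The upper bound $\lambda_{\max}\lesssim 1$ follows from the bounded overlap of the $\omega_i$, exactly as in \eqref{eq:bounded-overlap}. For this we apply the strengthened Cauchy--Schwarz/finite-covering argument to the local terms in the $\tilde a$-inner product, and use that the coarse correction is an $\tilde a$-orthogonal projection of norm one; this gives $\lambda_{\max}\leq 1+C_{\mathrm{ov}}$. The substance is the lower bound. By the abstract lemma, it reduces to a stable decomposition: for every $u\in V_{h,0}$ we want $u=u_0+\sum_i u_i$ with $u_0\in V_{H,1}$, $u_i\in V_i$ and
\[
\|u_0\|_{\tilde a}^2+\sum_i\|u_i\|_{\tilde a}^2\leq C_0\|u\|_{\tilde a}^2 ,
\]
with $C_0$ independent of $\eta$, $h$, $H$. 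Then $\text{cond}\lesssim C_0(1+C_{\mathrm{ov}})$.

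For the decomposition we take $u_0=P_1u$, the NLMC moment projection, and $u_i=I_h(\chi_i(u-P_1u))$. These sum correctly because $I_h$ reproduces $V_h$ and $\sum_i\chi_i=1$. The coarse term is controlled directly by Lemma~\ref{lem:P1-transient-stability}: $\|u_0\|_{\tilde a}\leq C_P\|u\|_{\tilde a}$, which is where $\Delta t\leq C_tH^2$ and $\kappa_0$ bounded enter.

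For the local terms we set $w=u-P_1u$ and use Lemma~\ref{lem2}:
\[
\sum_i\|u_i\|_{\tilde a}^2=\Delta t\sum_i\|u_i\|_a^2+\sum_i\|u_i\|^2\lesssim \Delta t(1+\eta_0)\|w\|_a^2+\Delta t H^{-2}\kappa_0\|w\|^2+\|w\|^2 .
\]
Since $\Delta t\leq C_tH^2$, the middle term is bounded by $C_t\kappa_0\|w\|^2$. With $\eta_0,\kappa_0$ bounded this gives $\sum_i\|u_i\|_{\tilde a}^2\lesssim\|w\|_{\tilde a}^2$. The triangle inequality and Lemma~\ref{lem:P1-transient-stability} then give $\|w\|_{\tilde a}\leq(1+C_P)\|u\|_{\tilde a}$, which completes the decomposition.

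The main obstacle is the step where the coefficient-weighted $L^2$ term $H^{-2}\int\kappa|w|^2$ appears after differentiating the partition of unity, since $\kappa$ may be huge. On the channels it is absorbed by the Poincaré inequality of Lemma~\ref{lem1}, thanks to the vanishing means \eqref{eq:P1-moments} of $w$. On the background $\kappa<\kappa_0$, so the term is absorbed by the mass part of $\tilde a$ only because $\Delta tH^{-2}$ is bounded. This is exactly why $V_{H,2}$ is not needed.

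I would also check that Lemma~\ref{lem:P1-transient-stability} really yields a contrast-independent $C_P$. In \eqref{eq:P1-L2-bound}, $\|\pi_1P_1u\|=\|\pi_1u\|\leq\|u\|$ uses that $\pi_1$ is an $s$-orthogonal projection onto piecewise constants on the $K_h^i$; this is also $L^2$-stable because $\kappa$ varies by at most $\eta_0$ on each $K_h^i$. The lower bound $c_tH^2\leq\Delta t$ is used there to convert $H^2a(u,u)$ into a multiple of $\Delta t\,a(u,u)$.
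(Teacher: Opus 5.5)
Your proposal is correct and follows the paper's proof essentially step for step. It uses the same decomposition $u_0=P_1u$, $u_i=I_h(\chi_i(u-P_1u))$, the same local stability from Lemma~\ref{lem2} (with $\Delta t\,\kappa_0H^{-2}$ bounded), the same coarse stability from Lemma~\ref{lem:P1-transient-stability}, and bounded overlap for the upper bound. The only difference is bookkeeping: you bound $\lambda_{\max}\le 1+C_{\mathrm{ov}}$ using that the exact coarse correction is an $\tilde a$-orthogonal projection, whereas the paper uses $\|v_0+\sum_i v_i\|_{\tilde a}^2\le 2\|v_0\|_{\tilde a}^2+2C_{\mathrm{ov}}\sum_i\|v_i\|_{\tilde a}^2$.
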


\begin{proof}

Set $u_0:=P_1u$. For $u\in V_h$, set $u_i:=I_h(\chi_i(u-u_0))\in V_i$. Since $I_h$ is linear and is the identity on $V_h$,
\[
 u_0+\sum_i u_i
 =u_0+I_h\!\left(\sum_i\chi_i(u-u_0)\right)
 =u_0+I_h(u-u_0)=u.
\]
The estimates of Lemma~\ref{lem2}, together with
$\Delta t\,\kappa_0\lesssim H^2$, give a constant
$C_{\rm loc}$ independent of $h$, $H$, and the contrast such that
\begin{equation}\label{eq:case1-local-stability}
 \sum_i\|u_i\|_{\tilde a}^{2}
 \leq C_{\rm loc}\|u-u_0\|_{\tilde a}^{2}.
\end{equation}
The bounded overlap gives a constant $C_{\rm ov}$, also independent of $h$, $H$, and the contrast, such that
\begin{equation}\label{eq:case1-overlap}
 \|u-u_0\|_{\tilde a}^{2}
 =\Big\|\sum_i u_i\Big\|_{\tilde a}^{2}
 \leq C_{\rm ov}\sum_i\|u_i\|_{\tilde a}^{2}.
\end{equation}

Moreover, by Lemma~\ref{lem:P1-transient-stability}, we have the control on the coarse component:
\begin{equation}\label{eq:coarse-stability}
 \|u_0\|_{\tilde a}\leq C_P\|u\|_{\tilde a}.
\end{equation}
Using \eqref{eq:case1-local-stability} and
$\|u-u_0\|_{\tilde a}\leq\|u\|_{\tilde a}+\|u_0\|_{\tilde a}$, we obtain the stable decomposition
\begin{equation}\label{eq:case1-stable-decomposition}
\begin{aligned}
 \|u_0\|_{\tilde a}^{2}+\sum_i\|u_i\|_{\tilde a}^{2}
 &\leq C_P^2\|u\|_{\tilde a}^{2}+C_{\rm loc}\|u-u_0\|_{\tilde a}^{2}\\
 &\leq \bigl(C_P^2+2C_{\rm loc}(1+C_P^2)\bigr)
 \|u\|_{\tilde a}^{2}.
\end{aligned}
\end{equation}
Conversely, for arbitrary $v_0\in V_0$ and $v_i\in V_i$, bounded
overlap gives
\begin{equation}\label{eq:case1-continuity}
 \left\|v_0+\sum_i v_i\right\|_{\tilde a}^{2}
 \leq 2\|v_0\|_{\tilde a}^{2}
 +2C_{\rm ov}\sum_i\|v_i\|_{\tilde a}^{2}.
\end{equation}

Therefore, we have 
\[
 \operatorname{cond}\Bigl(\bigl(R_{0}^{T}\tilde A_{0}^{-1}R_{0}
 +\sum_iR_i^{T}\tilde A_i^{-1}R_i\bigr)\tilde A\Bigr)
 =O(1),
\]
where the constant is independent of $h$, $H$, and the contrast.
\end{proof}

In the following, we prove the result when the basis functions in $V_{H,1}$ is computed by the iterative method.
\begin{thm}
\label{thm:case1-inexact}
Assume the hypotheses of Theorem~\ref{thm:case1} and
Assumption~\ref{ass:inexact_basis}, with
$\varepsilon_m\leq\varepsilon_*<1$. Let $\widehat R_0$ be the restriction
matrix associated with $\widehat V_{H,1}^{(m)}$ and let
\[
 \widehat{\widetilde A}_0
 :=\widehat R_0\widetilde A\widehat R_0^T.
\]
Then the two-level additive Schwarz preconditioner formed with the computed
coarse space satisfies
\begin{equation}
\label{eq:inexact-condition-number}
 \operatorname{cond}\!\left(
 \left(\widehat R_0^T\widehat{\widetilde A}_0^{-1}\widehat R_0
 +\sum_iR_i^T\widetilde A_i^{-1}R_i\right)\widetilde A\right)
 \leq C(\varepsilon_*)=O(1),
\end{equation}
where $C(\varepsilon_*)$ is independent of $h$, $H$, and the
coefficient contrast.
\end{thm}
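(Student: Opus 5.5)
The plan is to reduce to the abstract Schwarz framework used in Theorem~\ref{thm:case1}. There, the condition number is bounded once we have two ingredients: a stable decomposition $u=\hat u_0+\sum_i u_i$ with $\hat u_0\in\widehat V_{H,1}^{(m)}$, and the upper continuity bound. Continuity \eqref{eq:case1-continuity} holds verbatim for any subspace $V_0$, since it only uses $\|v_0+\sum v_i\|^2\le 2\|v_0\|^2+2\|\sum v_i\|^2$ and bounded overlap. Because the coarse matrix is the exact Galerkin matrix $\widehat R_0\widetilde A\widehat R_0^T$, the coarse solve is exact on the computed space. So the whole work is the stable decomposition, with a constant depending only on $\varepsilon_*$, $C_P$, $C_{\rm loc}$ and $C_{\rm ov}$.

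For the decomposition I would perturb the exact one. Given $u\in V_h$, put $u_0:=P_1u\in V_{H,1}$ as before, and define the computed coarse component $\hat u_0:=T_mu_0\in\widehat V_{H,1}^{(m)}$. The remainder is then
\[
u-\hat u_0=(u-u_0)+(u_0-T_mu_0).
\]
Next set $u_i:=I_h(\chi_i(u-\hat u_0))$, so that $\hat u_0+\sum_iu_i=u$ exactly, by the same interpolation identity. The local bound \eqref{eq:case1-local-stability} was derived from Lemma~\ref{lem2}, which needs the vanishing moments \eqref{eq:P1-moments}. Since $u-\hat u_0$ need not have vanishing moments on $K_h^j$, I would split linearly:
\[
u_i=I_h(\chi_i(u-u_0))+I_h(\chi_i(u_0-T_mu_0)).
\]
The first piece is controlled exactly as in Theorem~\ref{thm:case1}, by $C_{\rm loc}(1+C_P)^2\|u\|_{\widetilde a}^2$.

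For the second piece, write $e:=u_0-T_mu_0$. The computation in Lemma~\ref{lem2} gives
\[
\sum_i\|I_h(\chi_ie)\|_{\widetilde a}^2\lesssim \Delta t\,\|e\|_a^2+\Delta t\,H^{-2}\int_\Omega\kappa e^2+\|e\|^2.
\]
The middle term is where the norm $\tnorm{\cdot}$ is needed: it equals $\Delta t\,\|e\|_s^2$ (with the $H^{-2}$ scaling of $s$), so the whole right side is bounded by $C\tnorm{e}^2$. Assumption~\ref{ass:inexact_basis} then gives
\[
C\tnorm{e}^2\le C\varepsilon_*^2\|u_0\|_{\widetilde a}^2\le C\varepsilon_*^2C_P^2\|u\|_{\widetilde a}^2,
\]
using Lemma~\ref{lem:P1-transient-stability}.

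For the coarse term,
\[
\|\hat u_0\|_{\widetilde a}\le\|u_0\|_{\widetilde a}+\tnorm{e}\le(1+\varepsilon_*)C_P\|u\|_{\widetilde a}.
\]
Summing the three bounds gives the stable decomposition with constant
\[
C_0(\varepsilon_*)=(1+\varepsilon_*)^2C_P^2+2C_{\rm loc}(1+C_P)^2+C\varepsilon_*^2C_P^2.
\]
This constant is independent of $h$, $H$ and the contrast. Combined with continuity, it gives \eqref{eq:inexact-condition-number}.

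The main obstacle is the mismatch in norms. The perturbation $e$ has no moment cancellation on the high-conductivity components, so its weighted $L^2$ term $H^{-2}\int\kappa e^2$ cannot be handled by Lemma~\ref{lem1}. It has to be absorbed by the extra $\Delta t\|\cdot\|_s^2$ part of $\tnorm{\cdot}$, which is exactly why Assumption~\ref{ass:inexact_basis} is stated in that norm. I would check carefully that the $s$-scaling matches the $H^{-2}$ factor, and that $\Delta t\,H^{-2}\le C_t$ makes the constant uniform. If the scaling were off, I would fall back on $\Delta t\sim H^2$ to convert $\Delta t\,H^{-2}\int\kappa e^2$ into $C_t\|e\|_s^2$ and bound that by $\tnorm{e}^2/(c_tH^2)\cdot H^2$. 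A secondary point is the equal-dimension condition in Assumption~\ref{ass:inexact_basis}. It is used only to make sure $\widehat V_{H,1}^{(m)}$ is a legitimate coarse space with an invertible $\widehat{\widetilde A}_0$; the condition $\varepsilon_*<1$ keeps $T_m$ injective. With the decomposition built as above via $T_m$, no lower bound on $T_m$ is otherwise needed.
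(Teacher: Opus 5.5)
Your proposal is correct and is essentially the paper's proof: the same coarse component $\widehat u_0=T_mP_1u$, the same split $\widehat u_i=I_h(\chi_i w)+I_h(\chi_i e)$ with $w=u-P_1u$ and $e=P_1u-T_mP_1u$, the same absorption of the product-rule bound for the $e$-part into $\tnorm{e}^2$ via $\Delta t\,H^{-2}\int_\Omega\kappa e^2=\Delta t\,\|e\|_s^2$, and the same final constant up to naming. One caution about your contingency plan: if $\|\cdot\|_s$ were the unscaled $\int_\Omega\kappa v^2$, then $\tnorm{e}^2\ge c_tH^2\|e\|_s^2$ only gives $\Delta t\,H^{-2}\int_\Omega\kappa e^2\le (C_t/c_t)H^{-2}\tnorm{e}^2$, and the trailing factor $H^2$ in your fallback has no justification, so the argument genuinely requires the scaled reading $\|v\|_s^2=s(v,v)=H^{-2}\int_\Omega\kappa v^2$, which is the one the paper's proof uses.
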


\begin{proof}
Let $u_0=P_1u$ be the ideal coarse component used in the proof of
Theorem~\ref{thm:case1}, and define
\[
 \widehat u_0:=T_mu_0,\qquad
 w:=u-u_0,\qquad e:=u_0-\widehat u_0.
\]
By Assumption~\ref{ass:inexact_basis} and
Lemma~\ref{lem:P1-transient-stability},
\begin{align}
 \tnorm{e}
 &\leq \varepsilon_m\|u_0\|_{\widetilde a}
 \leq \varepsilon_m C_P\|u\|_{\widetilde a},
 \label{eq:inexact-e-bound}\\
 \|\widehat u_0\|_{\widetilde a}
 &\leq (1+\varepsilon_m)C_P\|u\|_{\widetilde a}.
 \label{eq:inexact-coarse-bound}
\end{align}

For every $z\in V_h$,
\begin{equation}
\label{eq:localization-product-bound}
 \sum_i\|I_h(\chi_i z)\|_{\widetilde a}^{2}
 \leq C
 \left(\|z\|_{\widetilde a}^{2}
 +\Delta t H^{-2}\int_\Omega\kappa|z|^2\right)
 =C\tnorm{z}^{2},
\end{equation}
where $C$ is independent of $h$, $H$, and the contrast.
Define
\[
 \widehat u_i:=I_h\bigl(\chi_i(u-\widehat u_0)\bigr)
 =I_h(\chi_iw)+I_h(\chi_i e).
\]
The nodal partition of unity yields
$u=\widehat u_0+\sum_i\widehat u_i$. Using
\eqref{eq:case1-local-stability},
\eqref{eq:localization-product-bound}, and
\eqref{eq:inexact-e-bound}, we obtain
\begin{align*}
 \sum_i\|\widehat u_i\|_{\widetilde a}^{2}
 &\leq 2C_{\mathrm{loc}}\|w\|_{\widetilde a}^{2}
   +2C\tnorm{e}^{2}\\
 &\leq
 \left(2C_{\mathrm{loc}}(1+C_P)^2
 +2C\varepsilon_m^2C_P^2\right)
 \|u\|_{\widetilde a}^{2}.
\end{align*}
Together with \eqref{eq:inexact-coarse-bound}, this proves a stable
decomposition in the computed spaces with constant bounded by $(1+\varepsilon_m)^2C_P^2
 +2C_{\mathrm{loc}}(1+C_P)^2
 +2C\varepsilon_m^2C_P^2$.
This constant is uniform when $\varepsilon_m\leq\varepsilon_*$. The reverse continuity estimate is unchanged because it depends only on bounded overlap. This completes the proof.
\end{proof}

\subsection{The condition number for Case 2}

Next, we give the result for the condition number of proposed preconditioner $\tilde{A}_{\text{pre}}$ in \textbf{Case 2}.
\begin{thm} \label{thm:case2}
Let $V_0=V_{H,1}+W_H$, and suppose that there is a linear operator
$I_0:V_{h,0}\to V_0$ for which
\begin{align}
 \|I_0u\|_{\widetilde a}&\leq C_0\|u\|_{\widetilde a},
 \label{eq:case2-coarse-stability}\\
 H^{-2}\int_\Omega\kappa|u-I_0u|^2
 &\leq C_A\,a(u-I_0u,u-I_0u).
 \label{eq:case2-weighted-approximation}
\end{align}
Assume that $C_0$ and $C_A$ and the overlap multiplicity are independent
of $h$, $H$, and the contrast. Then, for every $\Delta t>0$,
\[
\operatorname{cond}\Bigl(\bigl(R_0^T\widetilde A_0^{-1}R_0
+\sum_iR_i^T\widetilde A_i^{-1}R_i\bigr)\widetilde A\Bigr)=O(1),
\]
where $\widetilde A=M+\Delta tA$ and
$\widetilde A_i=M_i+\Delta tA_i$.
\end{thm}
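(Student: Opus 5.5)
We follow the abstract additive Schwarz framework used in the proof of Theorem~\ref{thm:case1}: we establish a stable decomposition and a continuity bound in the $\widetilde a$-norm, with constants independent of $h$, $H$, the contrast, and now also of $\Delta t$. Given $u\in V_{h,0}$, set $u_0:=I_0u\in V_0$, $w:=u-u_0$, and $u_i:=I_h(\chi_i w)\in V_i$. Since $I_h$ is linear and reproduces $V_h$, we have $u=u_0+\sum_iu_i$, exactly as before. The continuity estimate
\[
\Bigl\|v_0+\sum_iv_i\Bigr\|_{\widetilde a}^2\leq 2\|v_0\|_{\widetilde a}^2+2C_{\rm ov}\sum_i\|v_i\|_{\widetilde a}^2
\]
holds verbatim as \eqref{eq:case1-continuity}, since it depends only on bounded overlap. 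Thus everything reduces to bounding $\|u_0\|_{\widetilde a}^2+\sum_i\|u_i\|_{\widetilde a}^2\lesssim\|u\|_{\widetilde a}^2$.

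The coarse part is immediate from \eqref{eq:case2-coarse-stability}. It also gives $\|w\|_{\widetilde a}\leq(1+C_0)\|u\|_{\widetilde a}$.

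For the local parts we treat the mass and stiffness contributions separately. For the mass term, the second estimate of Lemma~\ref{lem2} holds for any $w$, since it only uses $L^2$ stability of $I_h$ and $\sum_i\chi_i^2\leq1$. It gives $\sum_i\|u_i\|^2\lesssim\|w\|^2$. For the stiffness term, we use the first step of the proof of Lemma~\ref{lem2}, before Lemma~\ref{lem1} was invoked:
\[
\sum_i\|I_h(\chi_iw)\|_a^2\lesssim\|w\|_a^2+H^{-2}\int_\Omega\kappa|w|^2 .
\]
The weighted $L^2$ term is where Case~1 needed $\Delta t\kappa_0\lesssim H^2$. Here we instead apply \eqref{eq:case2-weighted-approximation} to obtain $H^{-2}\int_\Omega\kappa|w|^2\leq C_A\|w\|_a^2$, so $\sum_i\|u_i\|_a^2\lesssim(1+C_A)\|w\|_a^2$. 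Multiplying by $\Delta t$ and adding the mass estimate yields
\[
\sum_i\|u_i\|_{\widetilde a}^2\lesssim(1+C_A)\|w\|_{\widetilde a}^2\lesssim(1+C_A)(1+C_0)^2\|u\|_{\widetilde a}^2 .
\]
No relation between $\Delta t$ and $H$ enters: $\Delta t$ multiplies only the $a$-terms, and those are controlled by $a$-terms alone.

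Combining the two bounds gives the stable decomposition constant $C_0^2+C(1+C_A)(1+C_0)^2$. The standard additive Schwarz lemma then bounds $\lambda_{\max}$ by $2\max(1,C_{\rm ov})$ and $\lambda_{\min}^{-1}$ by the decomposition constant, which proves the $O(1)$ condition number. The only delicate step is checking that the interpolant and partition-of-unity estimates carry no hidden $\Delta t$- or contrast-dependent constants. In particular, the $\|w\|_a^2$ term in the product-rule bound needs $\sum_i\chi_i^2\leq1$, and $|\nabla\chi_i|\lesssim H^{-2}$ must enter only through the $\kappa$-weighted $L^2$ term. Once those are verified, the argument is a direct adaptation of Theorem~\ref{thm:case1}, with assumption \eqref{eq:case2-weighted-approximation} replacing Lemma~\ref{lem1}.
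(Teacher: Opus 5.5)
Your proposal is correct and follows essentially the same route as the paper. You decompose with $u_0=I_0u$ and $u_i=I_h(\chi_i(u-I_0u))$, and use the product rule together with \eqref{eq:case2-weighted-approximation} to absorb the $H^{-2}\int_\Omega\kappa|w|^2$ term into $a(w,w)$, so no relation between $\Delta t$ and $H$ is needed. You then conclude with coarse stability, bounded overlap, and the abstract additive Schwarz lemma. One minor slip: the bound should read $|\nabla\chi_i|^2\lesssim H^{-2}$, and summing that weighted term over $i$ also uses bounded overlap.
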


\begin{proof}
Put $u_0=I_0u$, $w=u-u_0$, and
$u_i=I_h(\chi_iw)$. Since $I_h$ is linear and is the identity on $V_h$,
$u=u_0+\sum_i u_i$. The stability of $I_h$, bounded overlap,
the product rule, and \eqref{eq:case2-weighted-approximation} imply
\begin{align*}
 \sum_i\|u_i\|_{\widetilde a}^2
 &\lesssim \|w\|^2+\Delta t\left(a(w,w)
       +H^{-2}\int_\Omega\kappa|w|^2\right)\\
 &\lesssim \|w\|^2+\Delta t\,a(w,w)
 =\|w\|_{\widetilde a}^2.
\end{align*}
By \eqref{eq:case2-coarse-stability},
$\|w\|_{\widetilde a}\leq(1+C_0)\|u\|_{\widetilde a}$ and
$\|u_0\|_{\widetilde a}\leq C_0\|u\|_{\widetilde a}$. Hence
\[
 \|u_0\|_{\widetilde a}^2+
 \sum_i\|u_i\|_{\widetilde a}^2
 \lesssim\|u\|_{\widetilde a}^2.
\]
The reverse continuity estimate follows from bounded overlap. The abstract
additive Schwarz lemma now gives the asserted condition-number bound.
\end{proof}

We note that Theorem~\ref{thm:case2} is stated based on verifiable
interpolation properties. In practice, we can take standard finite element or MsFEM space as $W_H$. The further details are omitted here.

\section{Numerical Experiments}
\label{sec:numerical}

\subsection{A time independent permeability field}
In this section, we present numerical results to demonstrate the performance of the proposed method. We consider the computational domain $\Omega = [0,1]^2$ with a time-independent high-contrast permeability field $\kappa$ and piecewise constant source term,shown in Figure~\ref{fig:ex1_a_f}. 

\begin{figure}[ht!]
\centering
\includegraphics[scale=0.35]{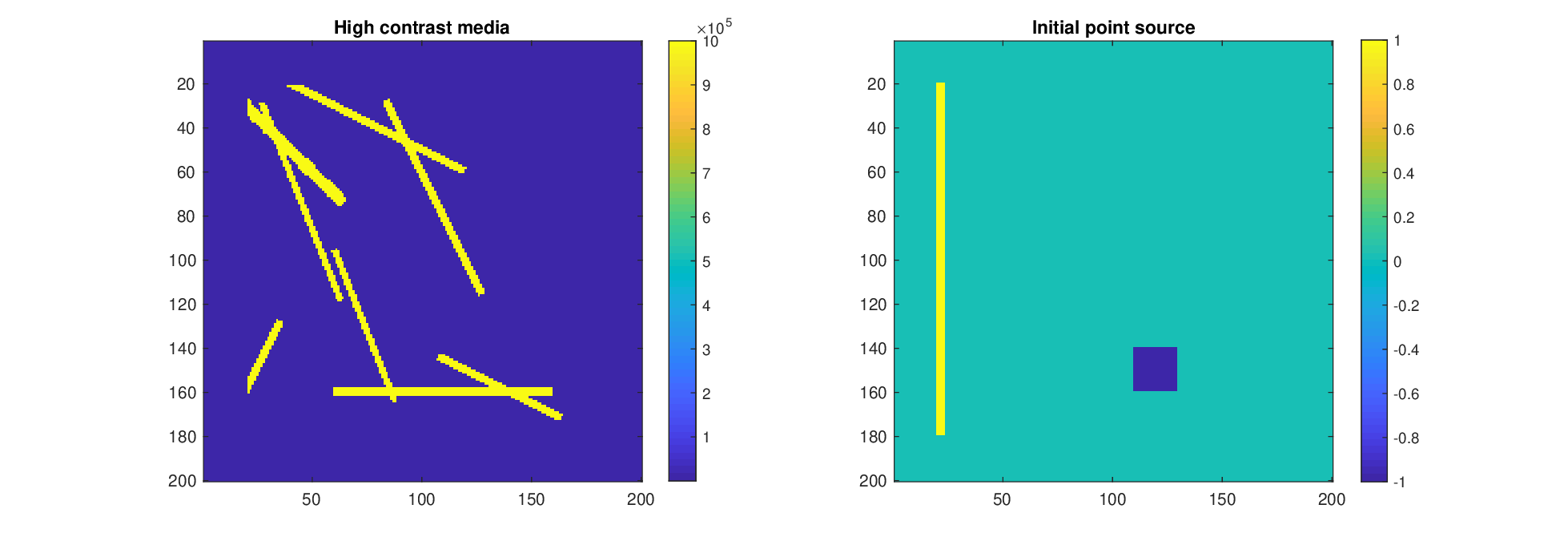}
\caption{The medium parameter $\kappa$}\label{fig:ex1_a_f}
\end{figure}

The fine and coarse mesh sizes are taken as $h = 1/200$ and $H = 1/20$, respectively. The time step size is set to $\Delta t = 0.1$ for Case 1 and $\Delta t = 0.002$ for Case 2.

We compare the convergence of the two-level preconditioned conjugate gradient (PCG) method equipped with different coarse spaces: the standard multiscale finite element space $V_{\text{ms}}$ ($\dim V_{\text{ms}} = 361$), the generalized multiscale finite element space $V_{\text{gms}}$ 
($\dim V_{\text{gms}} = 722$), the NLMC subspace $V_{H,1}$ that corresponds to the high permeability ($\dim V_{H,1} = 107$), the full NLMC space $V_H = V_{H,1} + V_{H,2}$ ($\dim V_H = 507$), and the space $V_H = V_{H,1} + V_{\text{ms}}$ ($\dim V_H = 468$). We let $cr=log(\frac{\kappa_{max}}{\kappa_{min}})$. The relative energy error convergence histories for different $cr$s are reported in Figures~\ref{fig:ex1_case1} and \ref{fig:ex1_case2}, for Case 1 and 2, respectively. As illustrated in Figure~\ref{fig:ex1_case1}, using the classical polynomial coarse space results in the slowest convergence, requiring approximately more than 50 iterations to reach the prescribed tolerance. When using MsFEM coarse space results, the iteration number is smaller than the case of classical polynomial, but the convergence is still contrast-dependent. The GMsFEM coarse space reduces the number of iterations to about 26, but at the expense of a larger coarse-space dimension. By contrast, the proposed NLMC space $V_H = V_{H,1} + V_{H,2}$ achieves the fastest convergence in only about 23 iterations, while maintaining a moderate dimension. In addition, the results indicates that enriching $V_{H,1}$ with $V_{\text{ms}}$ also produces a convergence behavior independent of contrast, and the number of iterations are slightly larger than those of GMsFEM.

Furthermore, Figure~\ref{fig:ex1_case2} shows that the NLMC subspace $V_{H,1}$ alone yields a convergence rate comparable to that of GMsFEM (approximately 30 iterations), however, $V_{H,1}$ has a much smaller dimension. This confirms that $V_{H,1}$ is particularly effective in preconditioning the high-permeability components of the stiffness matrix.  

	\begin{figure}
		\centering
			\includegraphics[width=0.24\textwidth]{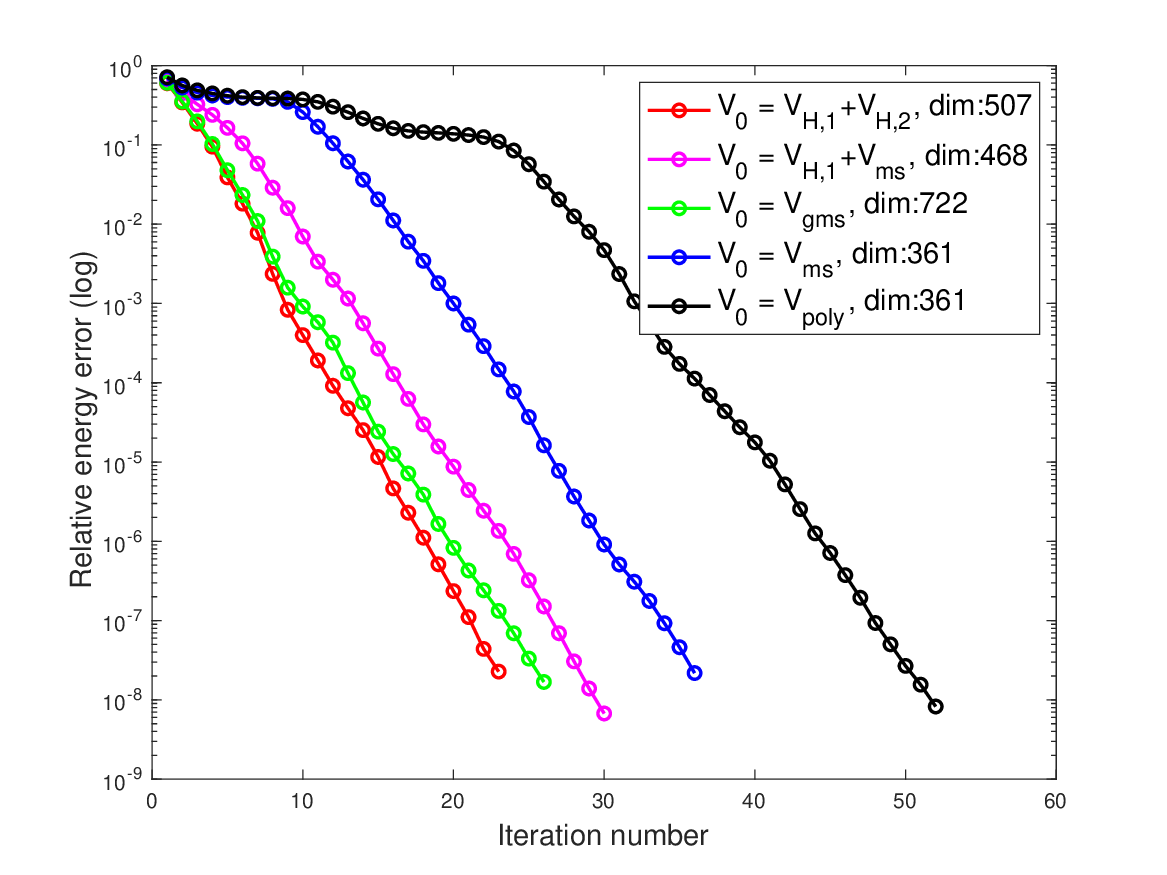}
			\includegraphics[width=0.24\textwidth]{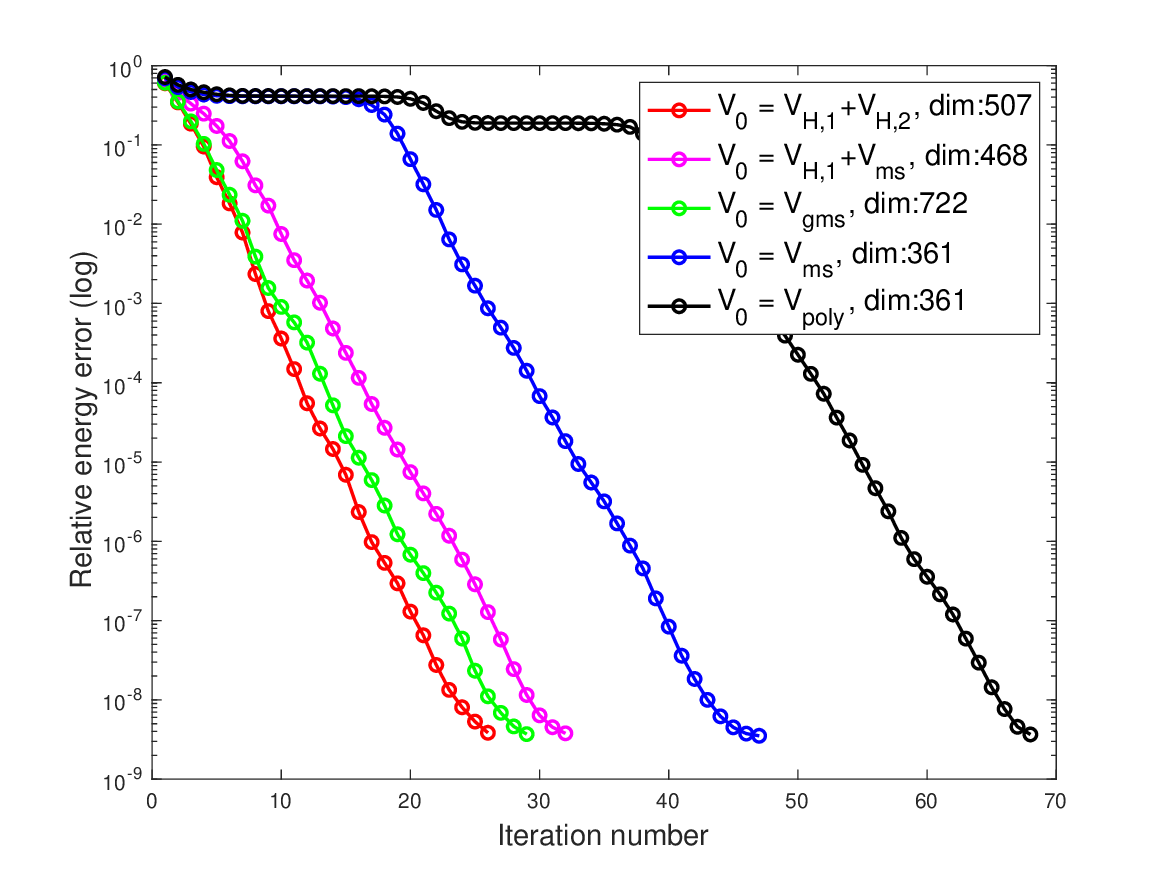}
			\includegraphics[width=0.24\textwidth]{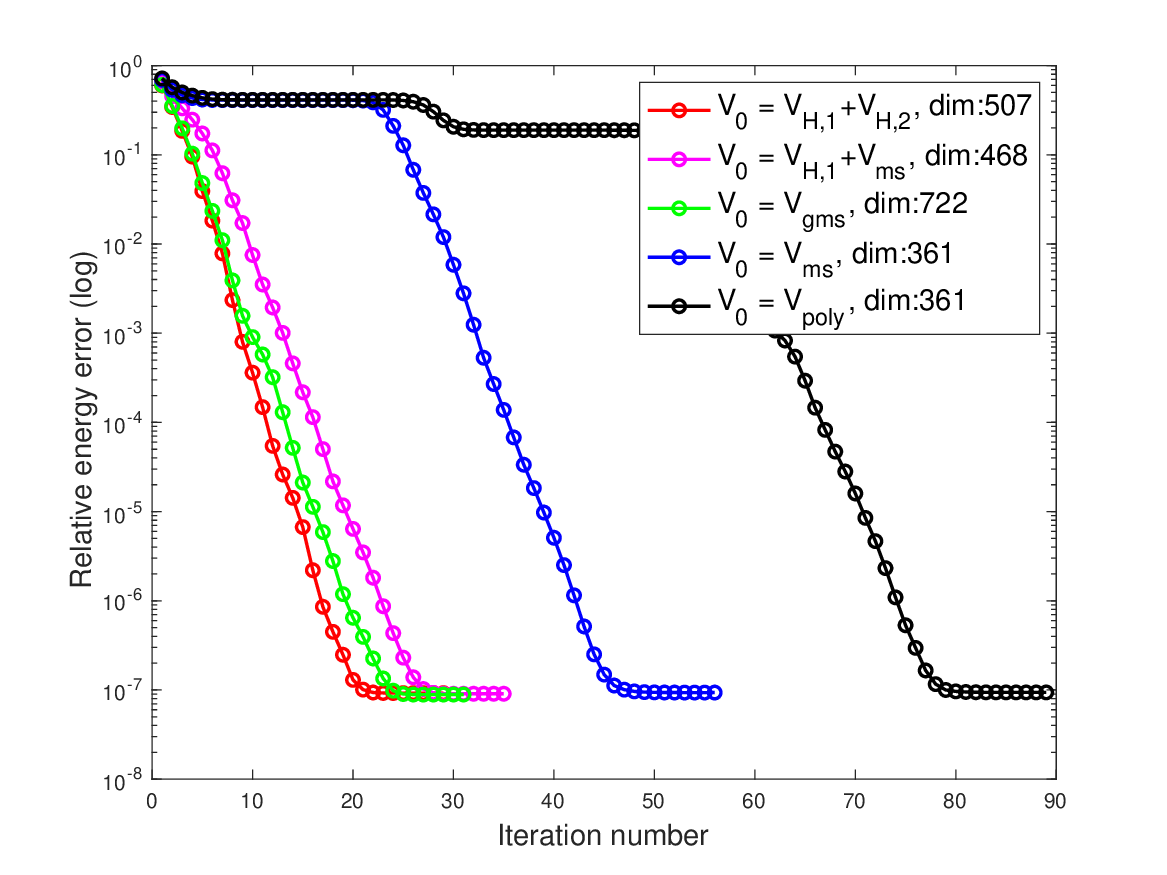}
            \includegraphics[width=0.24\textwidth]{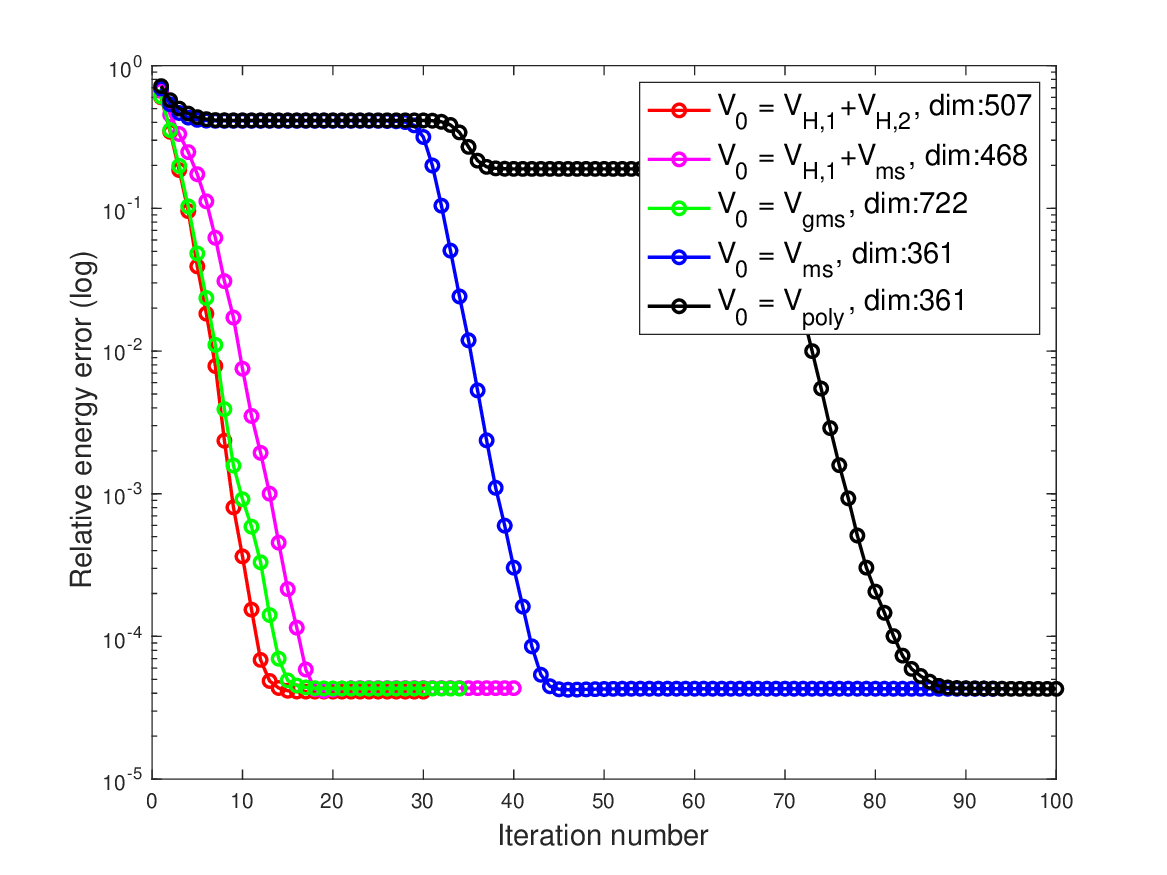}
		\caption{Energy error convergence history with PCG. $\text{dim}(V_{ms})= 361,\text{dim}(V_{gms})= 722, \text{dim}(V_{H,1}+V_{ms})=468, \text{dim}(V_{H,1}+V_{H,2})= 507$. From left to right: $cr = 4,6,8,10$.}\label{fig:ex1_case1} 
	\end{figure}

	\begin{figure}
		\centering
			\includegraphics[width=0.24\textwidth]{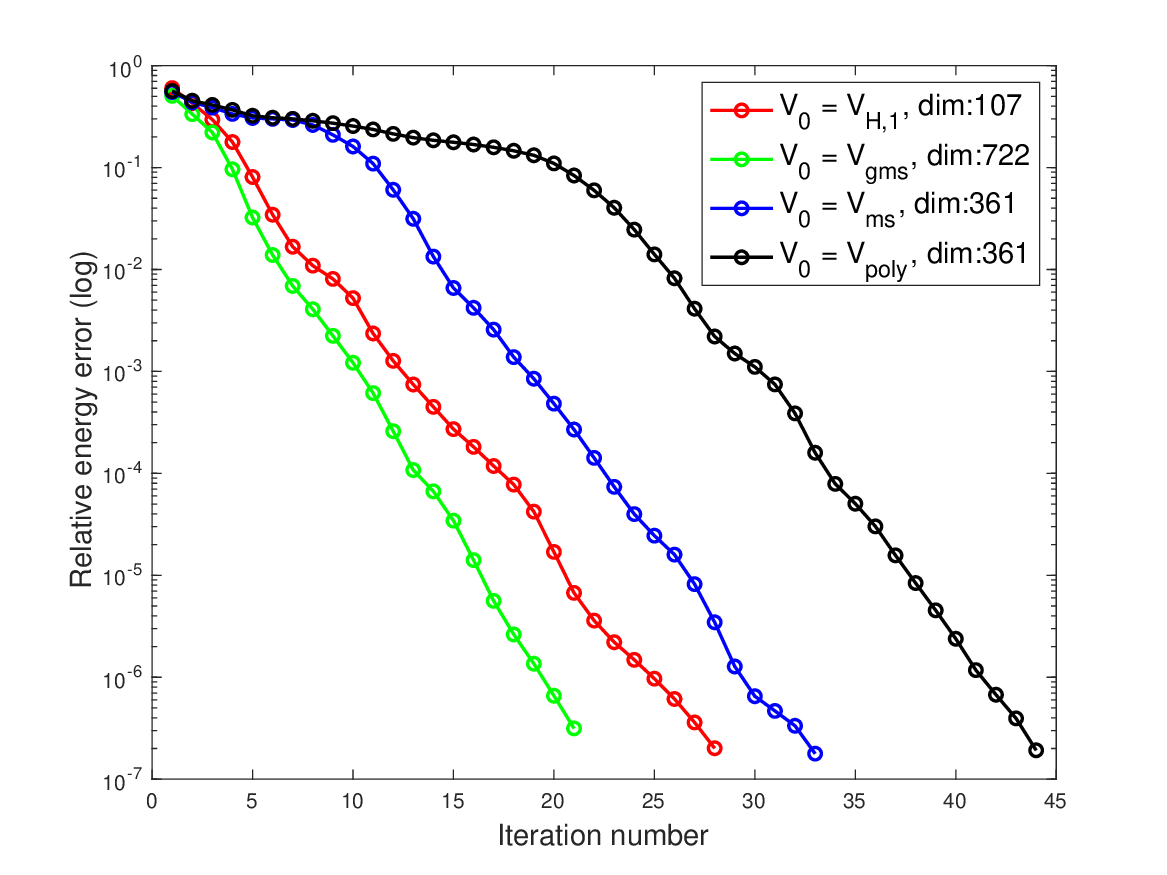}
			\includegraphics[width=0.24\textwidth]{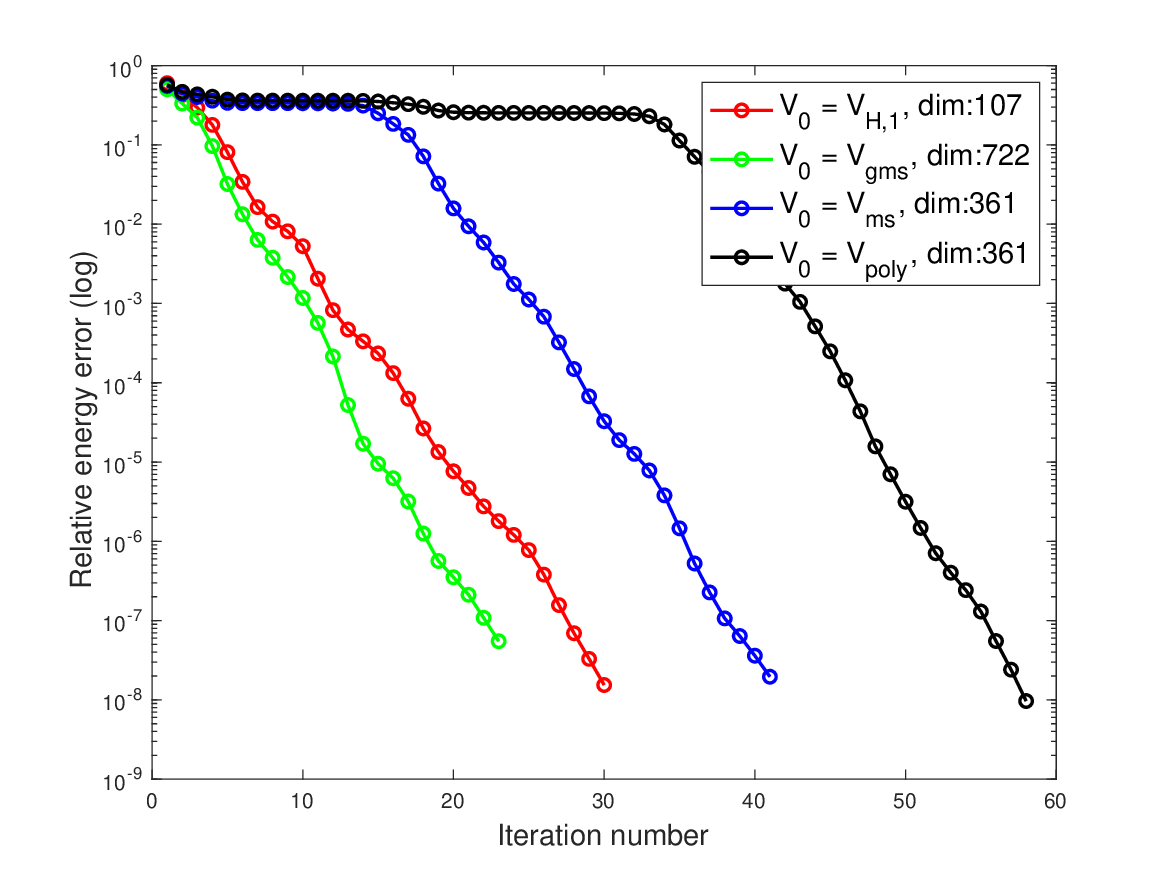}
			\includegraphics[width=0.24\textwidth]{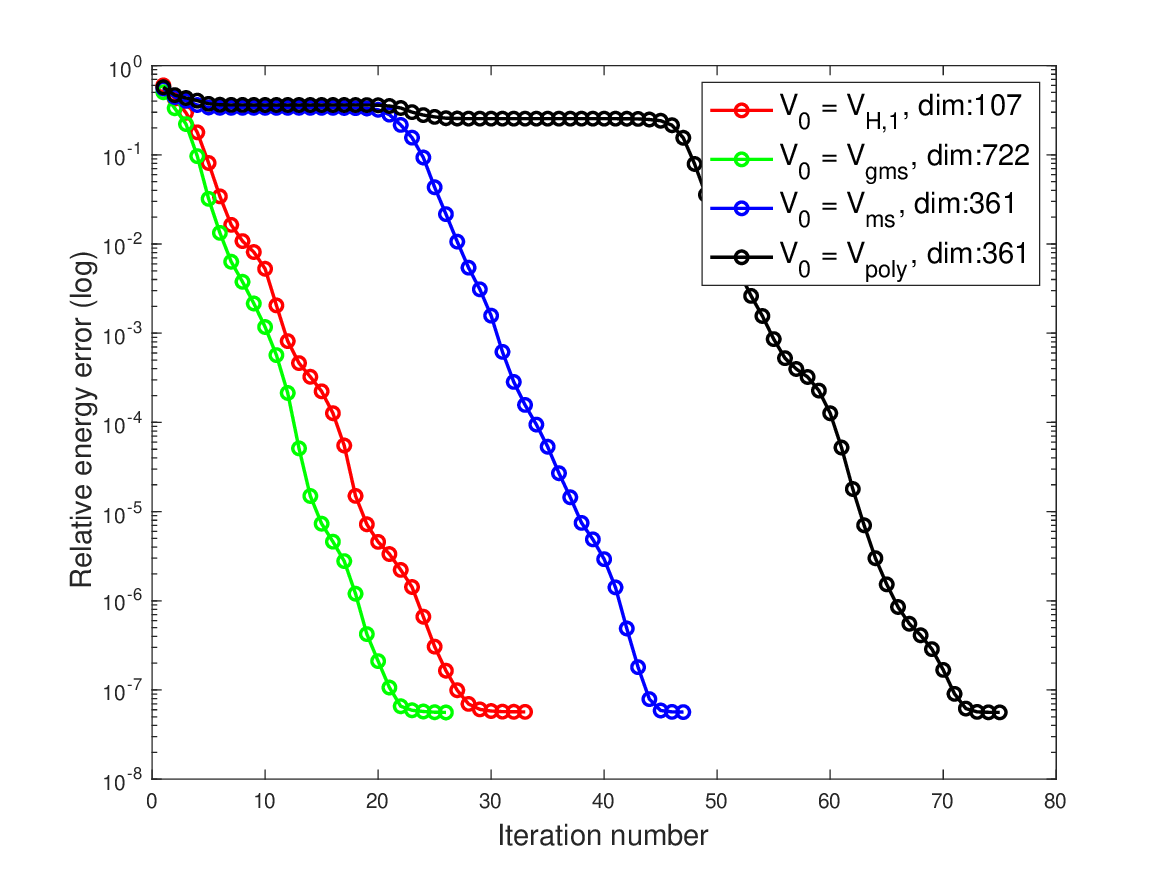}
			\includegraphics[width=0.24\textwidth]{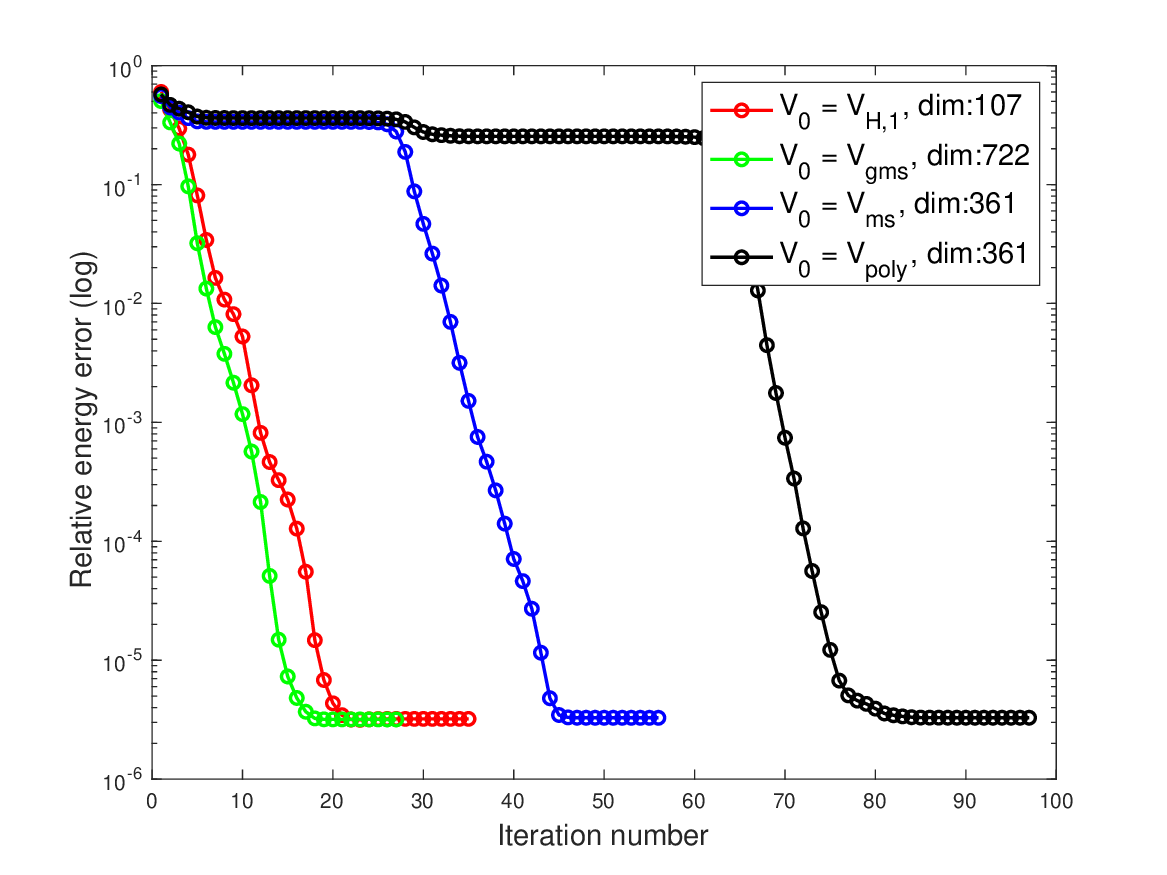}
		\caption{ Energy error convergence history with PCG. $\text{dim}{V_{ms}}= 361,\text{dim}{V_{gms}}= 722, \text{dim}{V_{H,1}}= 107$. From left to right: $cr = 4,6,8,10$. }\label{fig:ex1_case2}
	\end{figure}

These observations collectively demonstrate that $V_H = V_{H,1} + W_H$ or $V_{H,1}$ proposed in this work provides a good balance between computational cost and convergence robustness, making it a favorable coarse space for preconditioning the stiffness matrix in high-contrast multiscale problems. The detailed iteration number and computing time are reported in Tables \ref{tab:ex1_case1} and \ref{tab:ex1_case2} for Case 1 and 2, respectively. 

\begin{table}[htbp]
\centering
\small
\setlength{\tabcolsep}{4pt}
\caption{Two-level PCG solver performance: iteration time (seconds) and iteration number (in parentheses) for $N_x=20$, $n_x=10$, $\Delta t=0.1$.}\label{tab:ex1_case1} 
\begin{tabular}{cccccc}
\toprule
$C_r$ & $V_{H,1}+V_{H,2}$ & $V_{H,1}+V_{\text{ms}}$ & $V_{\text{ms}}$ & $V_{\text{gms}}$ & Poly \\
\midrule
4  & 5.01 (22) & 6.1 (27) & 7.1 (34) & 4.4 (23) & 10.0 (49) \\
6  & 5.4 (23) & 7.1 (30) & 8.6 (43) & 5.2 (26) & 13.3 (67) \\
8  & 5.2 (23) & 7.3 (33) & 10.4 (54) & 5.2 (26) & 18.2 (86) \\
10 & 5.3 (23) & 8.4 (39) & 12.7 (63) & 6.1 (31) & 22.2 (100) \\
\bottomrule
\end{tabular}
\end{table}

\begin{table}[htbp]
\centering
\small
\setlength{\tabcolsep}{5pt}
\caption{Two-level PCG solver performance: iteration time (seconds) and iteration number (in parentheses) for $N_x=20$, $n_x=10$, $\Delta t=0.002$.}\label{tab:ex1_case2} 
\begin{tabular}{ccccc}
\toprule
$C_r$ & $V_{H,1}$ & $V_{\text{ms}}$ & $V_{\text{gms}}$ & Poly \\
\midrule
4  & 7.4 (29) & 10.5 (34) & 6.1 (22) & 10.7 (45) \\
6  & 5.5 (31) & 7.7 (42) & 4.6 (24) & 10.7 (59) \\
8  & 7.4 (34) & 12.7 (48) & 6.3 (27) & 16.3 (76) \\
10 & 9.6 (36) & 17.6 (57) & 9.1 (28) & 26.5 (98) \\
\bottomrule
\end{tabular}
\end{table}

Moreover, In Figure ~\ref{fig:ex1_Viter}, we compare the convergence when the basis of $V_H = V_{H,1} + V_{H,2}$ are computed by a direct solver or by the iterative solver. We can see that, within 7 iterations, the NLMC basis constructed by the iterative method can achieve almost the same results as the ones constructed by the direct method. This indicates that we can improve the efficiency of NLMC basis construction significantly while maintaining the iteration counts in the two level preconditioning method.

\begin{figure}[ht!]
\centering
\includegraphics[width=0.24\textwidth]{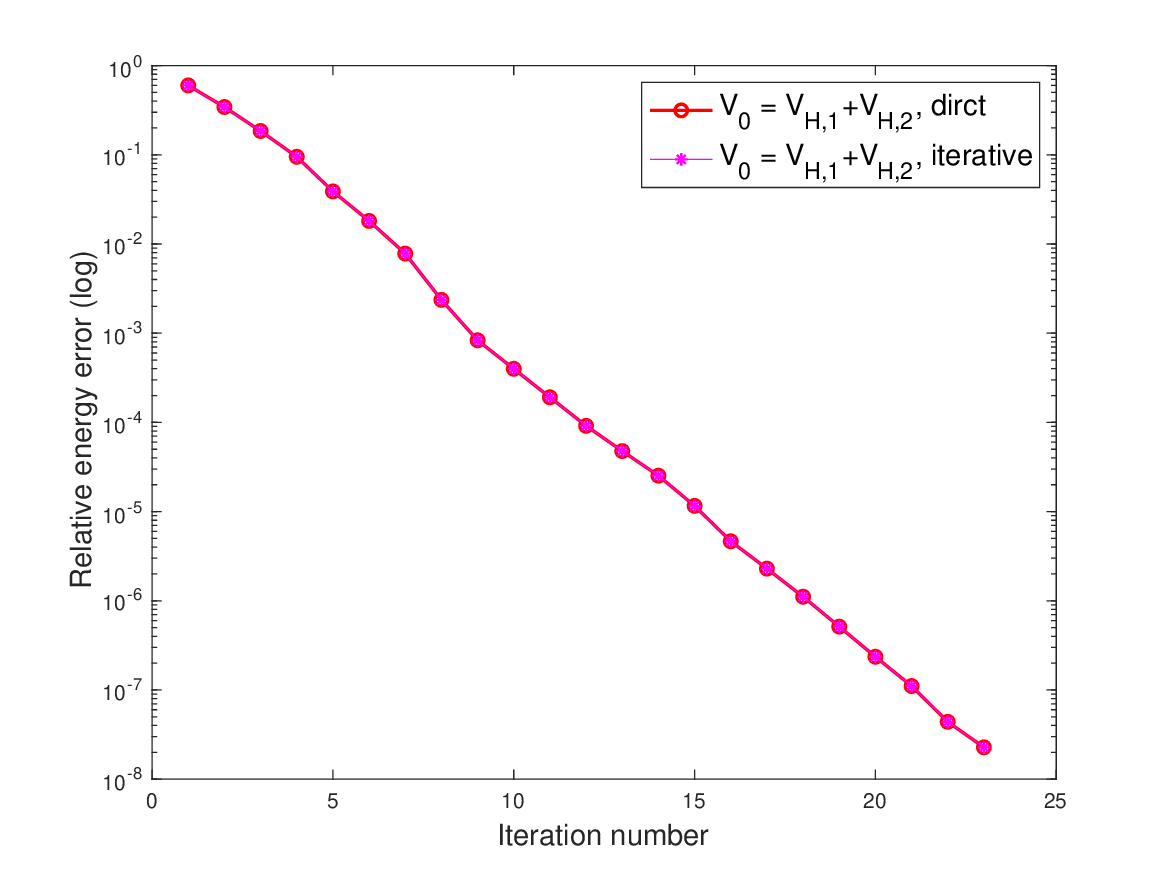}
\includegraphics[width=0.24\textwidth]{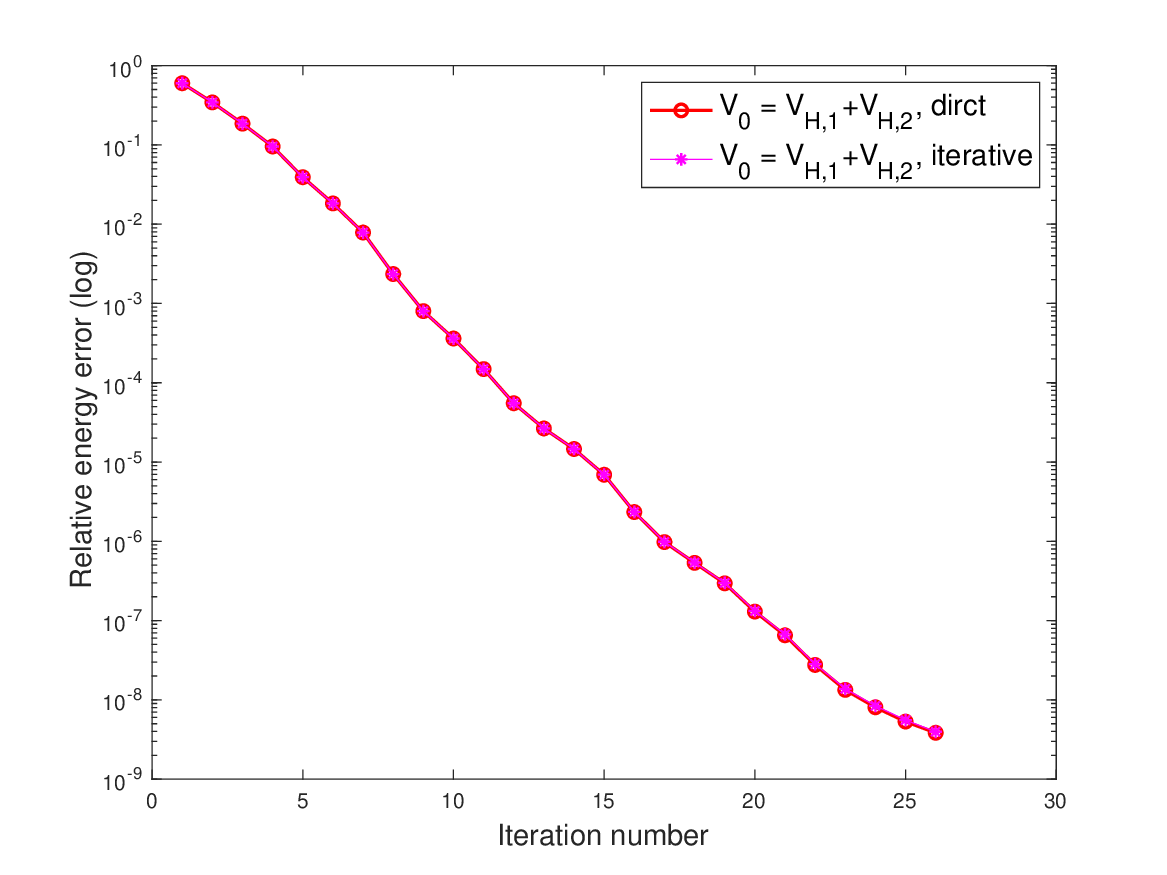}
\includegraphics[width=0.24\textwidth]{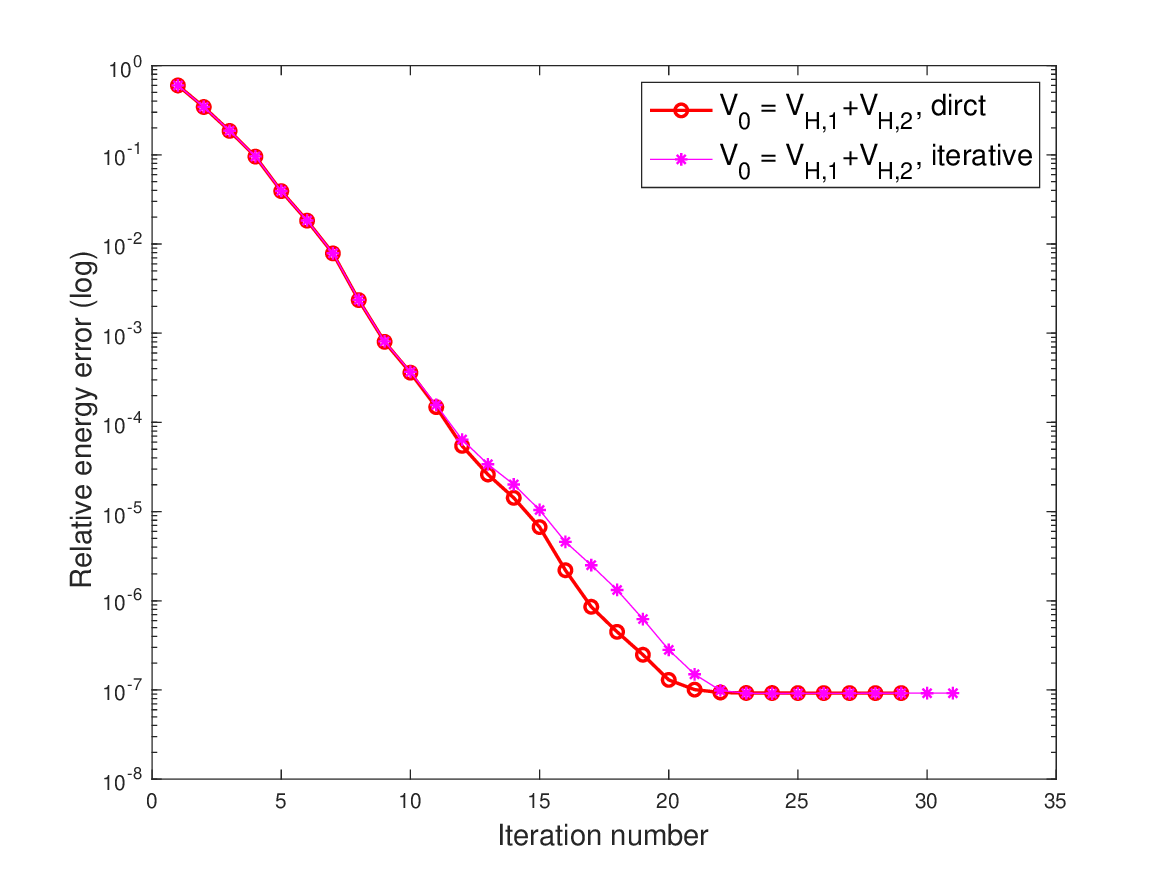}
\includegraphics[width=0.24\textwidth]{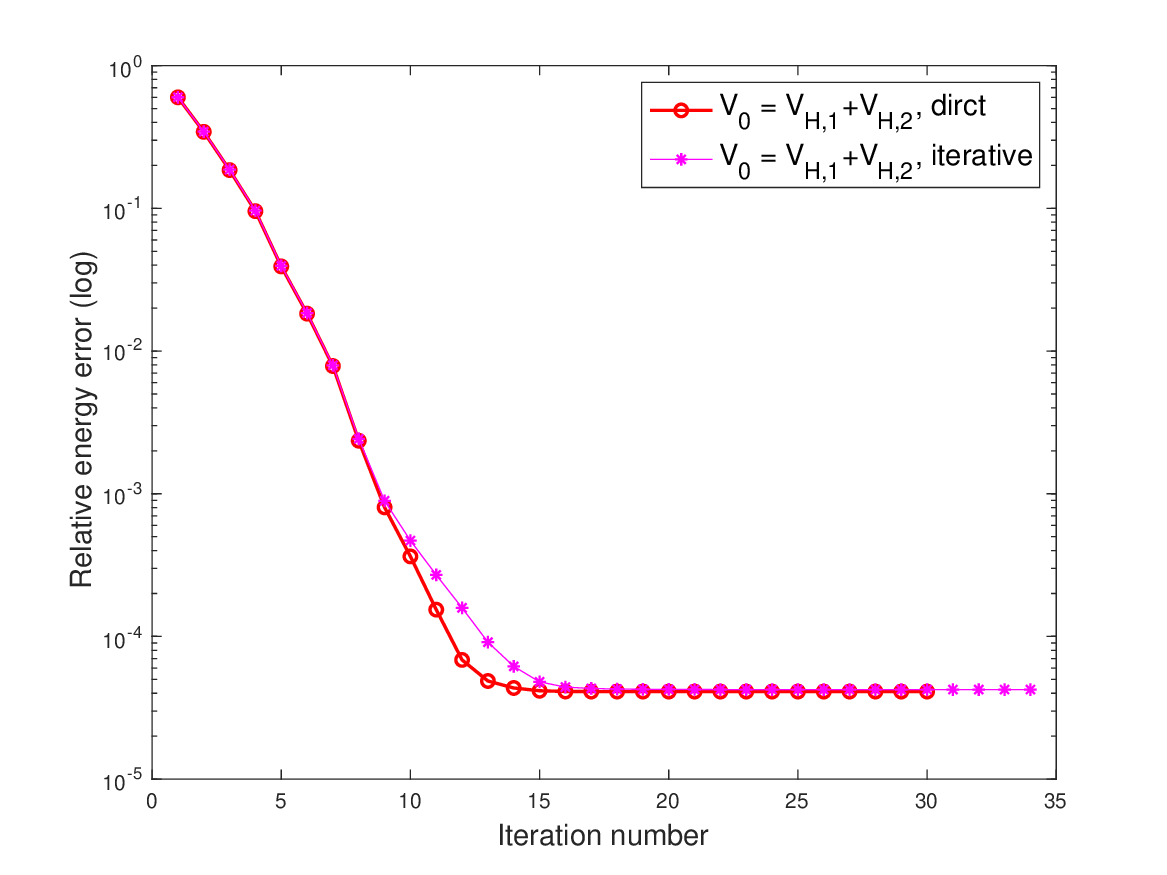}
\caption{Energy error history using coarse space $V_{H}=V_{H,1}+V_{H,2}$, where the basis functions are computed directly or by iterative scheme with 7 iteration. From left to right: $cr=4,6,8,10$.}\label{fig:ex1_Viter}
\end{figure}

\subsection{A time dependent permeability field}
In the second example, we consider a compressible fluid flow problem
\begin{equation*}
\begin{aligned}
    \partial_t(\phi \rho(u)) - \nabla \cdot (\frac{\kappa}{\mu} \rho(u)  \nabla u) &= f \;\;\;\; \text{ on } \Omega\times (0,T] \\
    u &= g \;\;\;\; \text{ on }  \partial \Omega  \times (0,T]\\
        u &= u_0 \;\;\;\; \text{ on }   \Omega  \times \{t=0\}
\end{aligned}
\end{equation*}
where the computational domain $\Omega = [0,64m]\times [0,64m]$. The coarse grid mesh size $H=1/10$ and fine grid mesh size $h=1/200$. The porosity $\phi = 500$, viscosity $\mu = 5cP$, and compressibility parameter $c = 10^{-8} Pa^{-1}$. The initial pressure $u_0 = 2.16 \times 10^7 Pa$, and the boundary is set to $g = 2.16 \times 10^7 Pa$ as well. The fluid density 
\begin{equation*}
    \rho(u)  = \rho_{\text{ref}} e^{c(u-u_{\text{ref}})},
\end{equation*}
where $\rho_{\text{ref}} = 850 kg/m^3$ is the reference density, and $u_{\text{ref}} = 2\times 10^{7} Pa$ is the reference pressure. The source term is $f(x,y)=\text{exp}(-\frac{(x-0.55L_x).^2+(y-0.55L_y).^2}{(0.01L_x)^2})\times 10^7$, where $L_x=L_y=64$.

To compute the reference solution, we adopt the Explicit-Implict-Null (EIN) method \cite{EIN, duchemin2022mars, shu_ein_ldg}. The idea is to add and subtract a term with constant diffusion coefficient $\nabla \cdot (\kappa_0 \nabla u)$ in the nonlinear equation, and then apply implicit time marching for the added linear term, and explicit time marching for nonlinear term and subtracted linear term, that is,
\begin{equation*}
\begin{aligned}
       &\frac{1}{\Delta t} {((u^{n+1}, v)-(u^{n}, v))}  + (\kappa_0 \nabla u^{n+1}, \nabla v)  \\
       &+ (\kappa(u^{n}) \nabla u^{n}, \nabla v) -(\kappa_0 \nabla u^{n}, \nabla v)   = (f,v).
\end{aligned}
\end{equation*}
In a matrix form, we have
\begin{equation}\label{eq:ein-mat}
\begin{aligned}
       & (M+\Delta t A_0) u^{n+1}   =  Mu^{n} +\Delta t F - \Delta t A(\kappa(u^{n}))u^{n} + \Delta t A_0 u^{n},
\end{aligned}
\end{equation} 
where $(A_0)_{ji}=\int_{\Omega} \kappa_0 \nabla \phi^0_i \cdot \phi^0_j$ and $\phi^0_i$ are finite element basis functions.

The reference solutions is computed on a $201\times 201$ grid, using the EIN method. The coarse grid is $10\times 10$. The absolute permeability $\kappa$ in this example is shown in Figure \ref{fig:sol_perm_ex2}, the values in the yellow region (channels) are $10^{cr}$ where $cr=6, 7, 8$, and the blue region is $1$. To coarse space is constructed using the absolute permeability $\kappa$ (which is $\kappa_0$ in equation \eqref{eq:ein-mat}) and stays unchanged during the simulation.

	\begin{figure}
		\centering
			\includegraphics[width=0.65\textwidth]{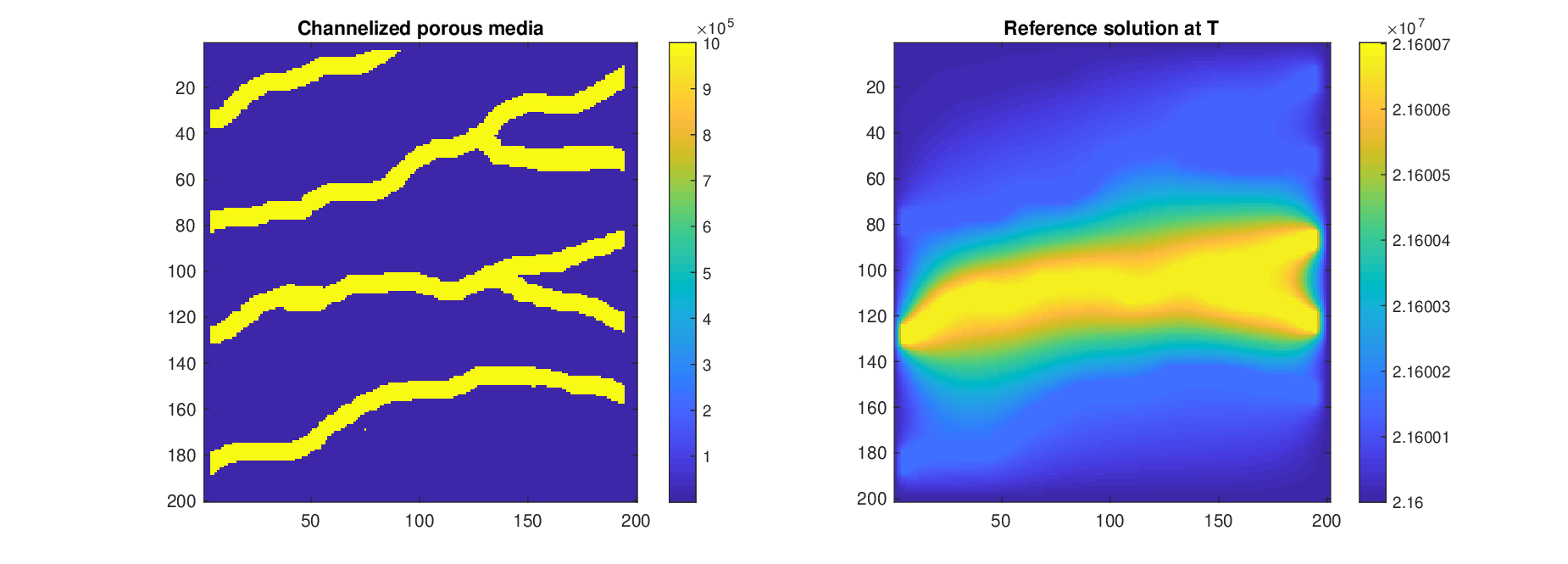}
		\caption{Left: the channelize porous media. Right: the reference solution at $T$.}\label{fig:sol_perm_ex2}
	\end{figure}

Similarly as before, for Case 1, we observe from Figure \ref{fig:ex2_V12} and Table \ref{tab:ex2_V12} that the using the coarse space $V_{gms}$, $V_{H,1}+V_{H,2}$ or $V_{H,1}+V_{ms}$ result in contrast-independent convergence rate, and $V_{H,1}+V_{H,2}$ requires much smaller number of iterations. Moreover, $V_{H,1}+V_{H,2}$ $V_{gms}$ are more robust with respect to the contrast ratio, but the dimension of the coarse space for $V_{H,1}+V_{H,2}$ is $158$, less than the dimension of the coarse space for $V_{gms}$ which is 242 in this example. 

	\begin{figure}
		\centering
			\includegraphics[width=0.3\textwidth]{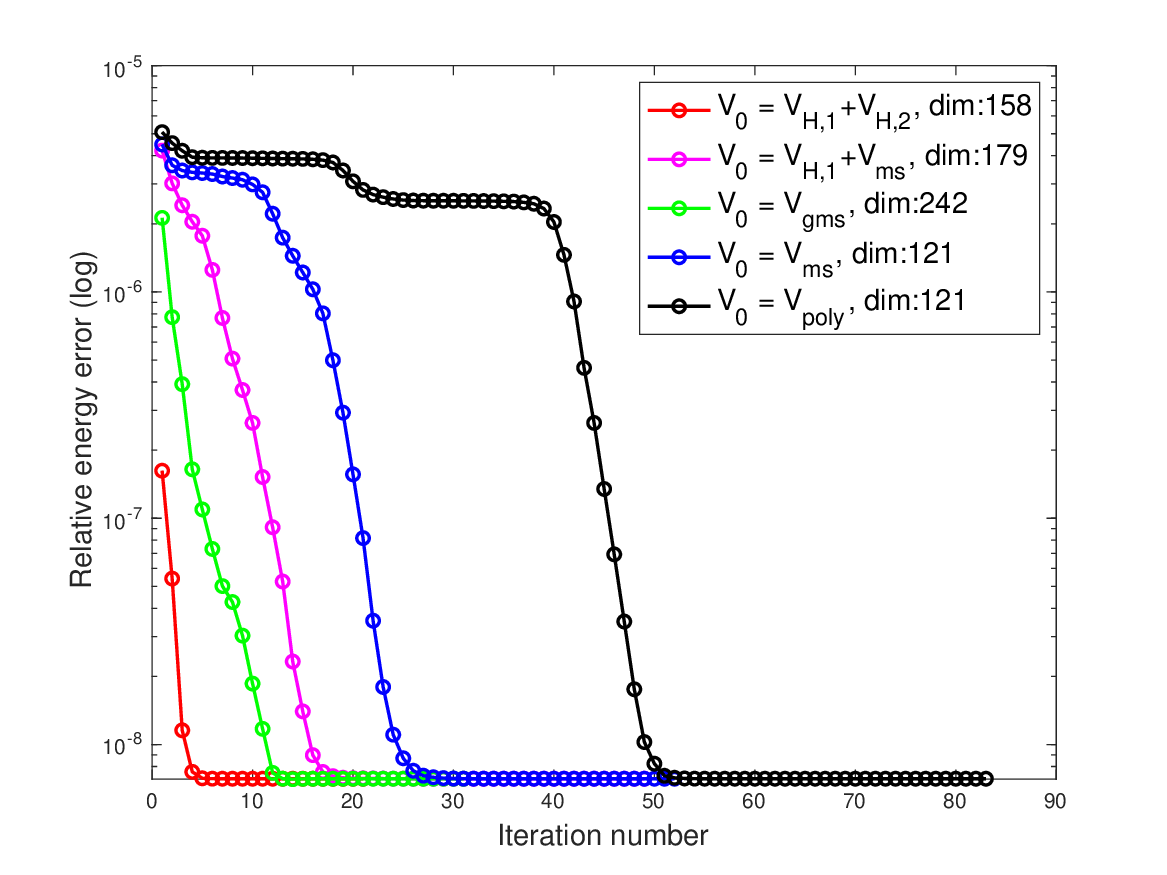}
			\includegraphics[width=0.3\textwidth]{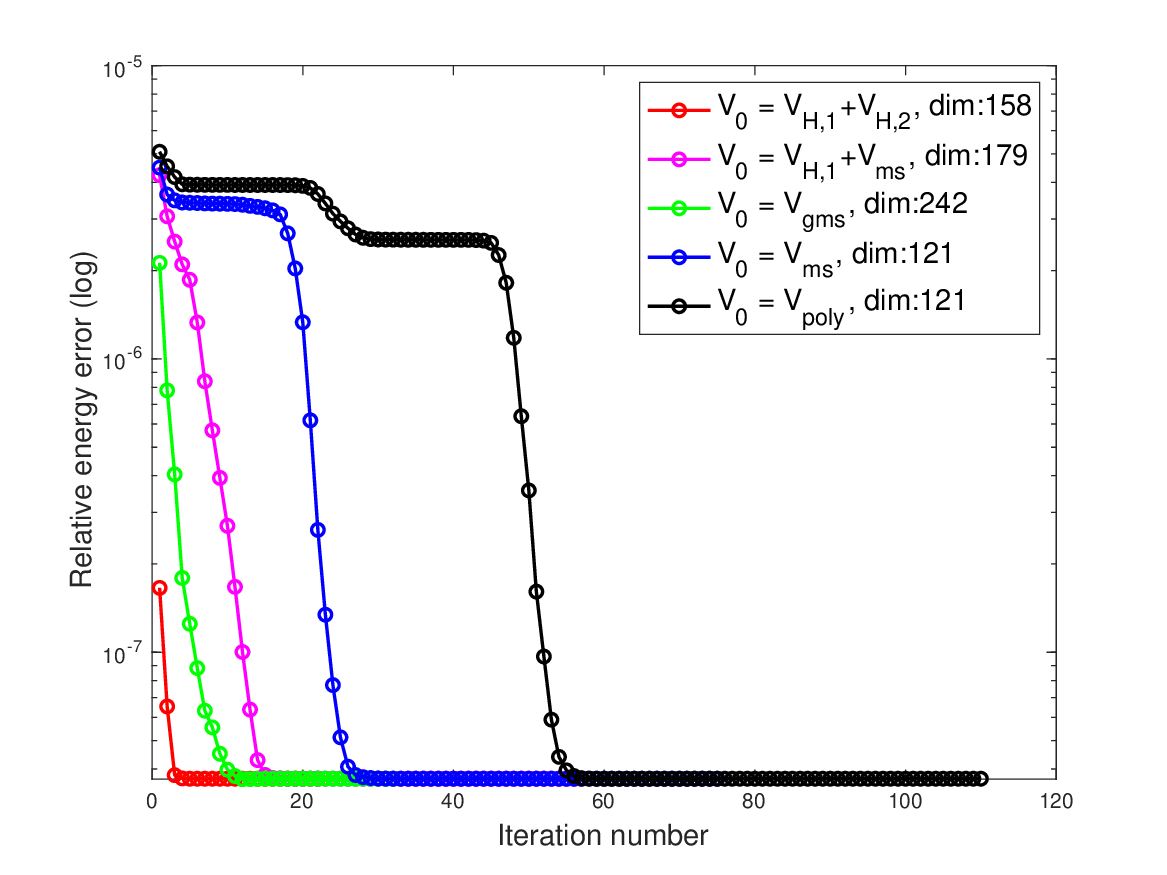}
			\includegraphics[width=0.3\textwidth]{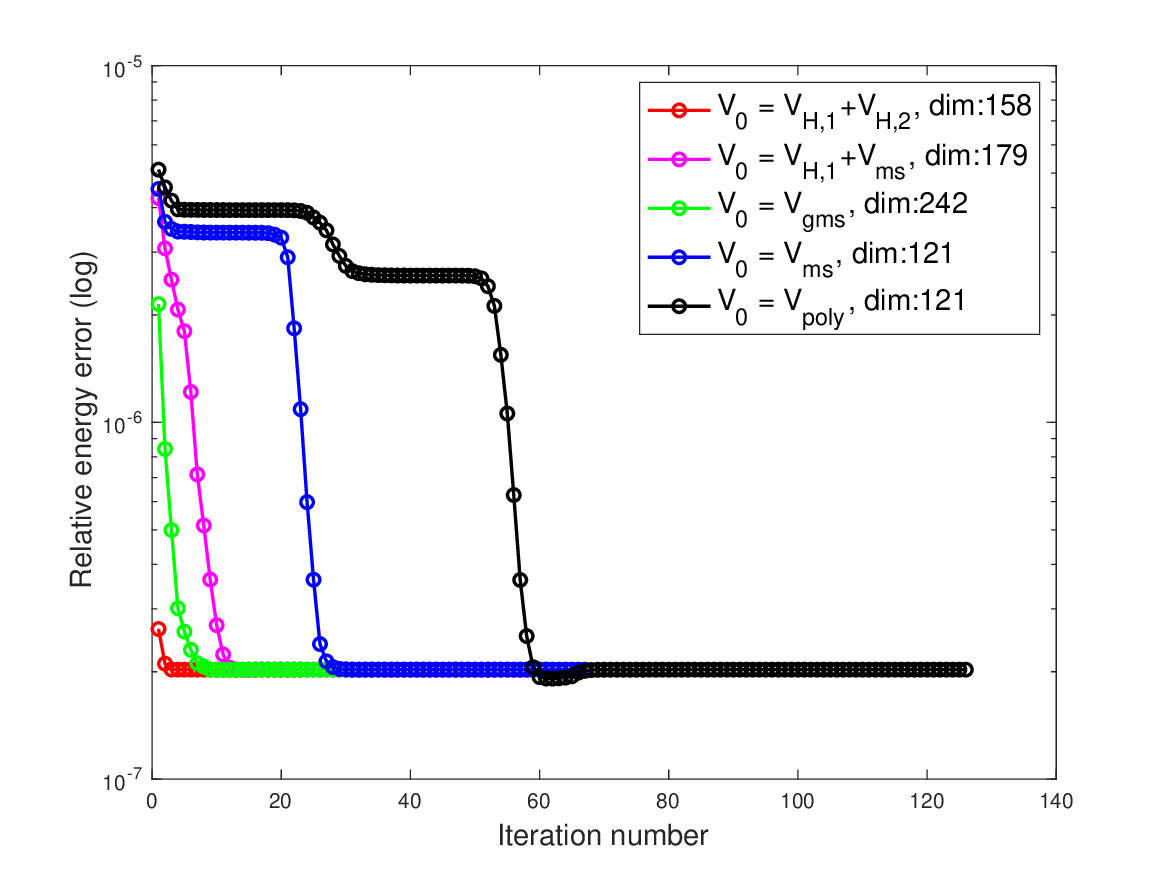}
		\caption{ Energy error convergence history with two level PCG. $\text{dim}{V_{ms}}= 121,\text{dim}{V_{gms}}= 242, \text{dim}{V_{H,1}+V_{ms}}=179, \text{dim}{V_{H,1}+V_{H,2}}= 158$. From left to right, $cr=6,7,8$.}\label{fig:ex2_V12}
	\end{figure}

\begin{table}[htbp]
\centering
\small
\setlength{\tabcolsep}{4pt}
\caption{Two-level PCG solver performance for $N_x=20$, $n_x=10$, $\Delta t=5\times 10^{-2}$. Iteration time in seconds and iteration number.}\label{tab:ex2_V12}
\begin{tabular}{cccccc}
\toprule
$C_r$ & $V_{H,1}+V_{H,2}$ & $V_{H,1}+V_{\text{ms}}$ & $V_{\text{ms}}$ & $V_{\text{gms}}$ & Poly \\
\midrule
6 & 2.0 (22)  & 4.5  (50) & 5.0 (53) & 3.1(32) & 7.5 (84) \\
7  &2.3 (23) & 5.9  (58) & 7.2 (76) & 3.1(33) & 10.8 (111) \\
8  &2.3  (24) & 5.9 (62) & 6.1 (68) & 3.2 (35) & 11.6 (127) \\
\bottomrule
\end{tabular}
\end{table}

For case 2, the results are presented in Figure \ref{fig:ex2_V1} and Table \ref{tab:ex2_V1}. It can be seen that with smaller time step size, only using $V_{H,1}$ as the coarse space can result in the contrast-independent convergence. Both $V_{H,1}$ and $V_{gms}$ are more robust with respect to the contrast ratio, compared with $V_{ms}$ and standard polynomial cases. While $V_{H,1}$ requires a little bit more iterations to converge, the dimension of the coarse space for $V_{H,1}$ is only $57$ which is much less than the dimension of the coarse space for $V_{gms}$.

	\begin{figure}
		\centering
			\includegraphics[width=0.3\textwidth]{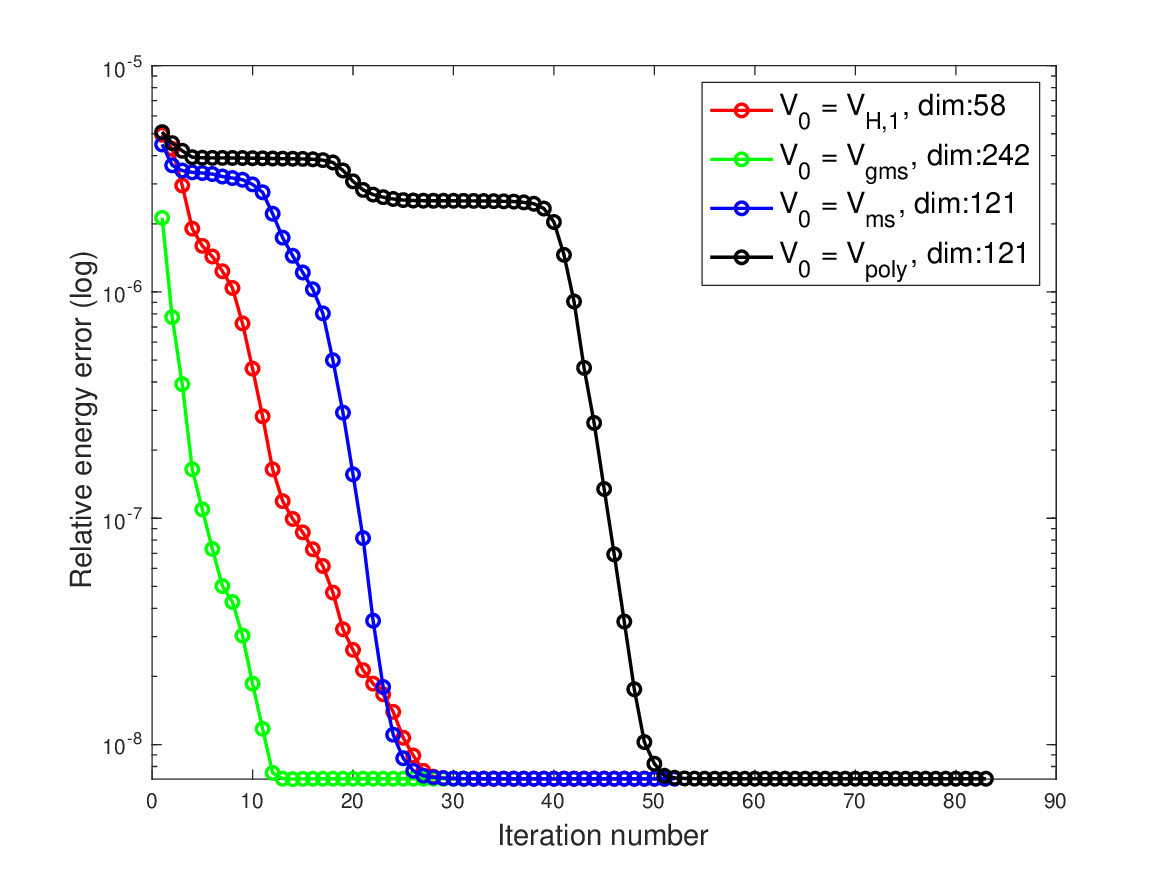}
			\includegraphics[width=0.3\textwidth]{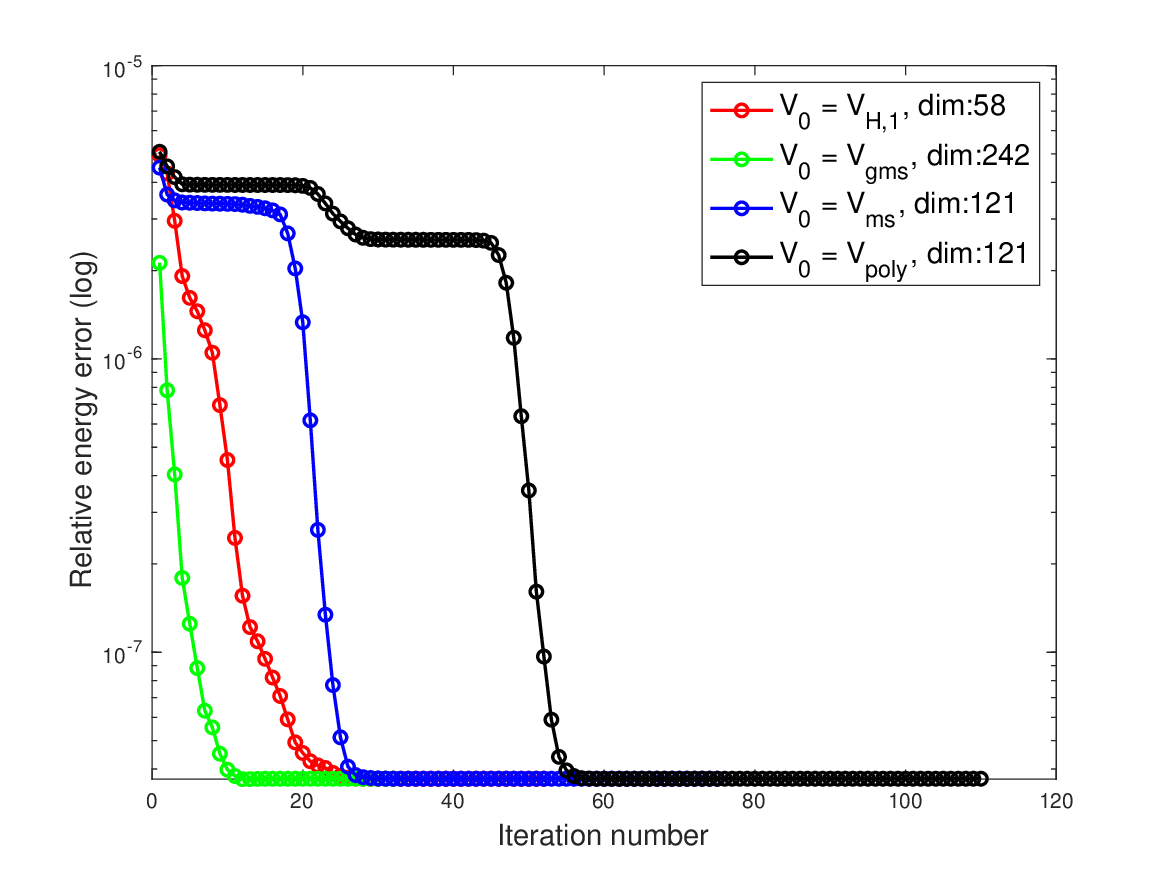}
			\includegraphics[width=0.3\textwidth]{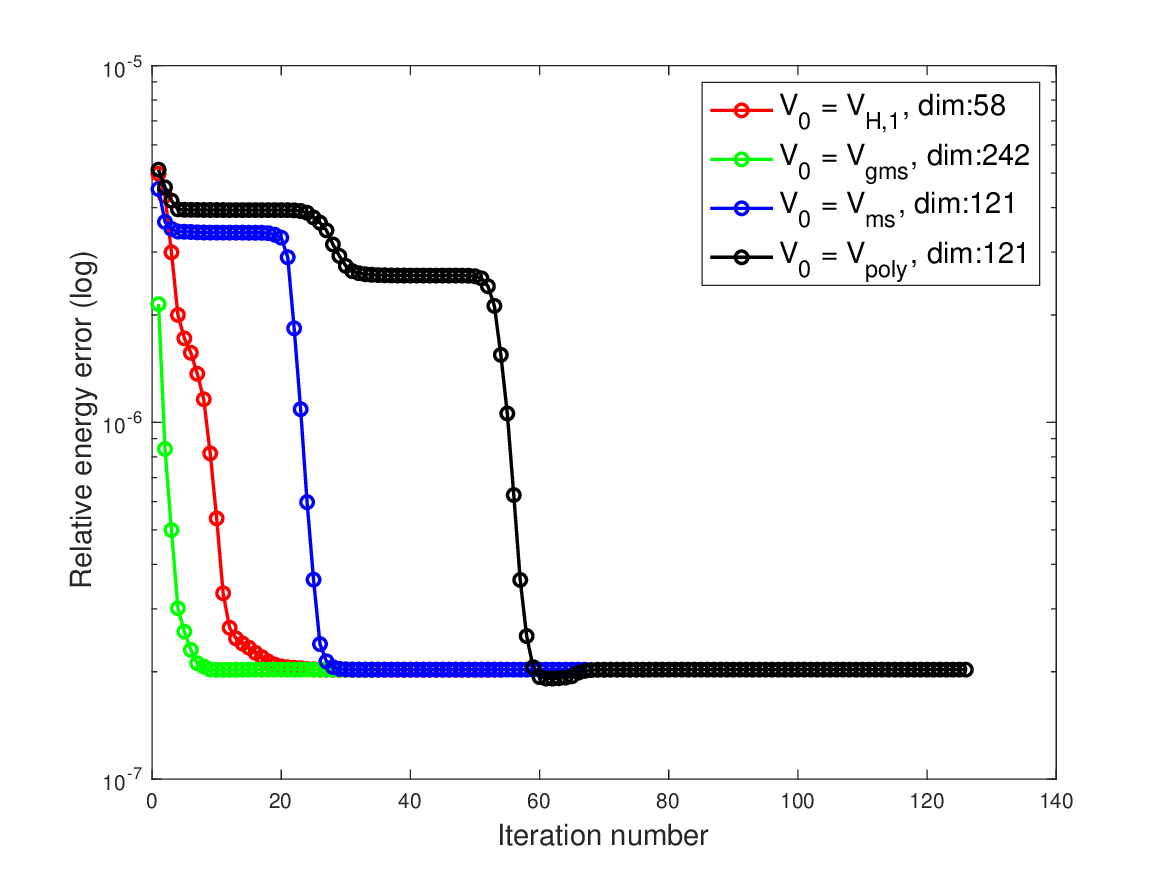}
		\caption{ Energy error convergence history with PCG. $\text{dim}{V_{ms}}= 121,\text{dim}{V_{gms}}= 242, \text{dim}{V_{H,1}}=58$.}\label{fig:ex2_V1}
	\end{figure}

\begin{table}[htbp]
\centering
\small
\setlength{\tabcolsep}{5pt}
\caption{Two-level PCG solver performance for $N_x=20$, $n_x=10$, $\Delta t=5\times 10^{-2}$. Iteration time in seconds and iteration number.}
\label{tab:ex2_V1}
\begin{tabular}{ccccc}
\toprule
$C_r$ & $V_{H,1}$ & $V_{\text{ms}}$ & $V_{\text{gms}}$ & Poly \\
\midrule
6 & 4.7 (52) & 4.6 (53) & 2.9 (32) & 7.2 (84) \\
7 & 5.3 (53) & 7.0 (76) & 3.1 (33) & 11.1 (111) \\
8 & 6.8 (55) & 8.6 (68) & 3.9 (35) & 12.6 (127) \\
\bottomrule
\end{tabular}
\end{table}

\subsection{Three dimensional case}
In this section, we test our proposed method on a three dimensional example. The channelized media and the reference solution are shown in Figure \ref{fig:ex3_kappa_u}. The computational domain $\Omega = [0,1]^3$. We used is PETSc in our implementation \cite{Balay2022a}.

\begin{figure}[ht!] \label{fig:ex3d_kappa_sol}
\centering
\includegraphics[width=0.45\textwidth]{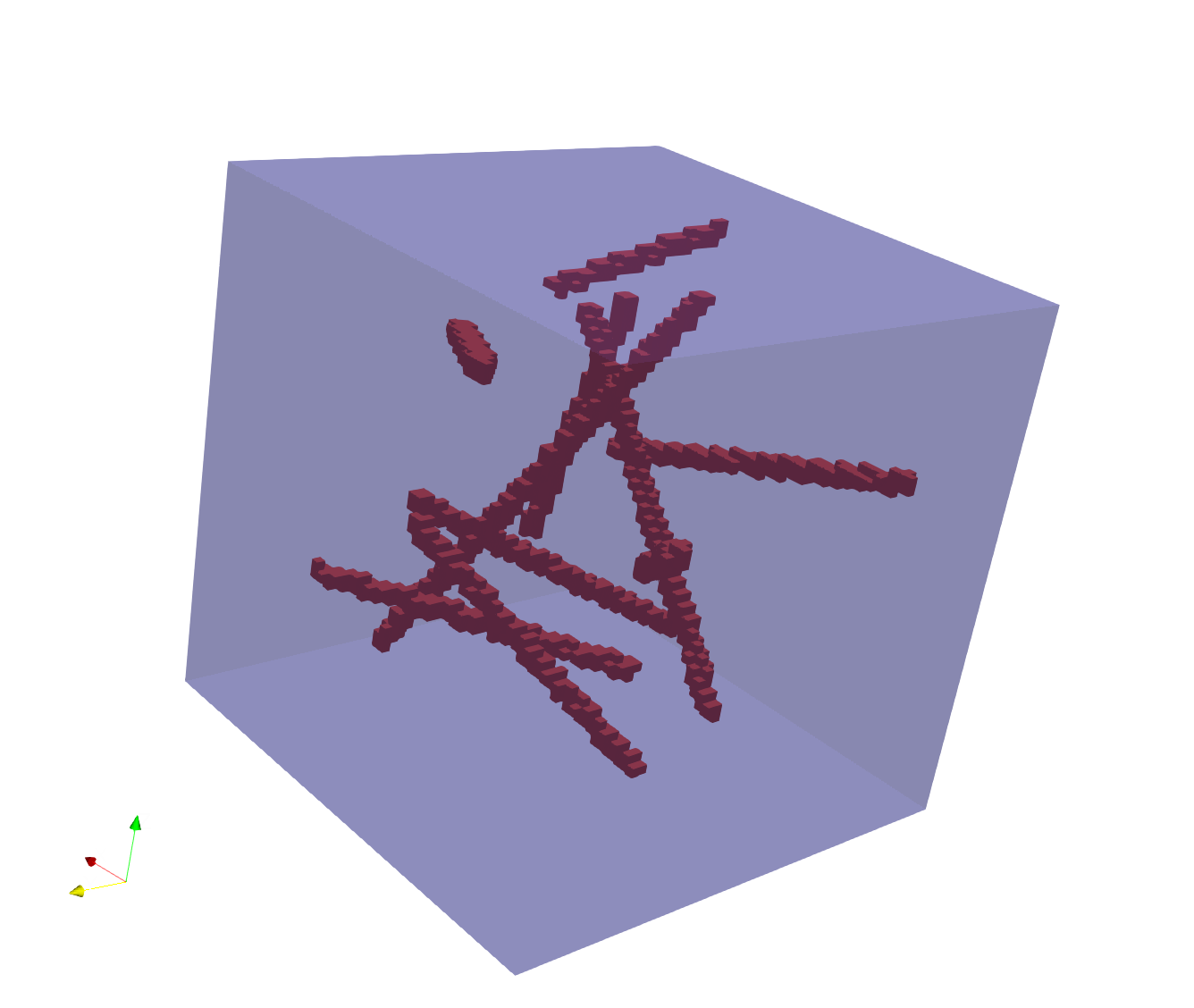}
\includegraphics[width=0.45\textwidth]{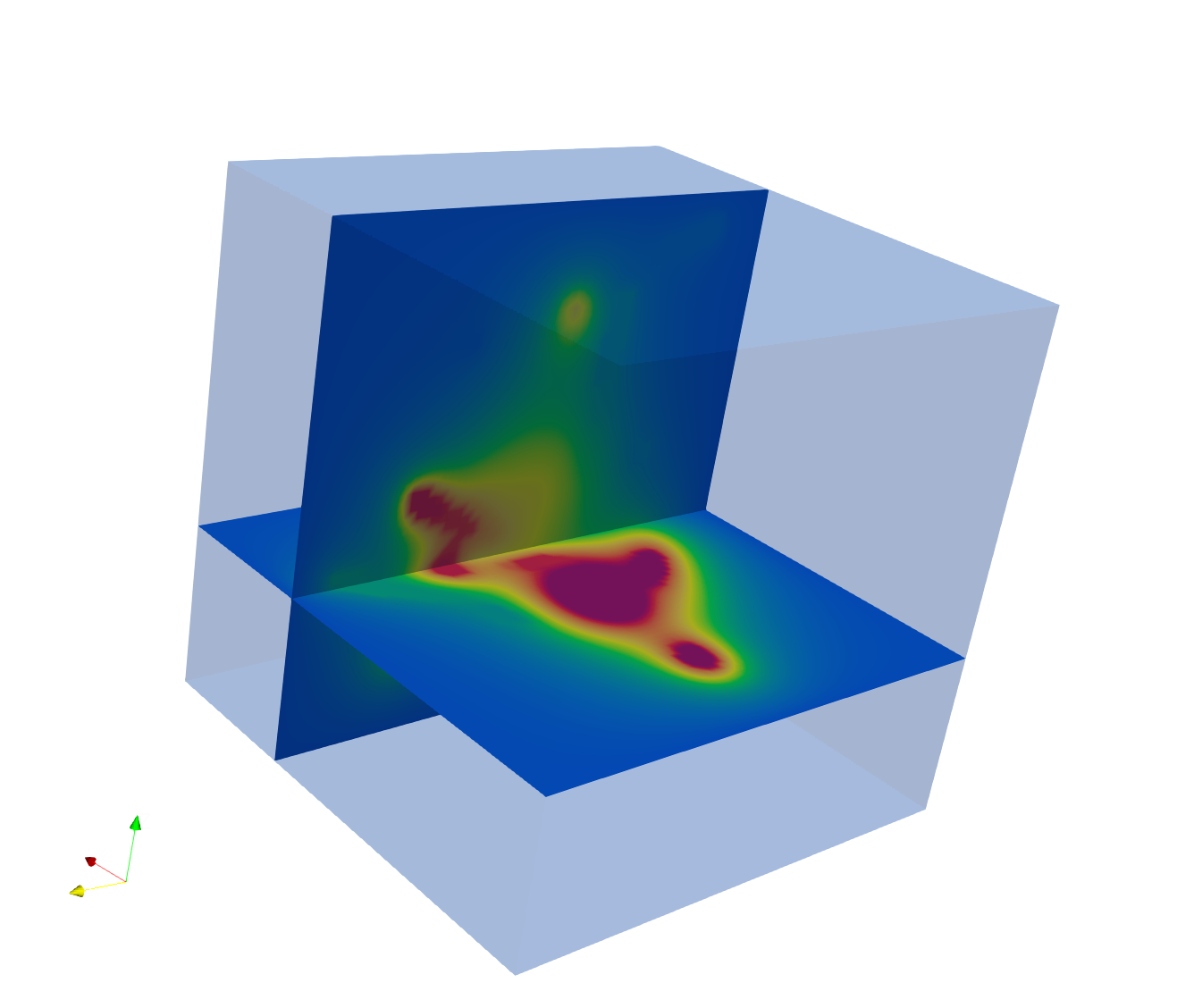}
\caption{The medium parameter $\kappa$ in three dimensional case, and the solution slice at final time.}\label{fig:ex3_kappa_u}
\end{figure}

The numerical results demonstrate that the proposed NLMC-based two-level preconditioners outperform the standard MsFEM, and the algebraic multigrid (gamg) solvers in both robustness and computational efficiency, see Figure \ref{fig:ex3_V12} and \ref{fig:ex3_V1} for case 1 and case 2, respectively. The coarse spaces $V_{H,1}+V_{H,2}$ and $V_{H,1}$ exhibit iteration counts that are essentially independent of the contrast ratio $cr$. Specifically, we can see from Table \ref{tab:pcg_iters_V12} and \ref{tab:pcg_iters_V1}, the full NLMC space $V_{H,1}+V_{H,2}$ requires merely $32$, $27$, $19$, and $13$ iterations for $C_r=4,6,8,10$, respectively, while the subspace $V_{H,1}$ requires $45$, $41$, $30$, and $21$ iterations. This is in contrast to the MsFEM and gamg approaches, whose iteration counts grow substantially with increasing contrast ratio, reaching $216$ and $121$ iterations, respectively, and eventually fail to converge at $cr=10$. 

Moreover, NLMC coarse space is comparable to GMsFEM in robustness, while it can have much less DOFs when the volume fraction of the channels is low. The NLMC coarse spaces achieve good reduction in problem dimension: the $V_{H,1}$ space contains only $130$ degrees of freedom, compared $1209$ for GMsFEM, while the $V_{H,1}+V_{H,2}$ space still maintains a competitive size of $1130$ degrees of freedom. The results in Table \ref{tab:strong_scalability} and \ref{tab:weak_scalability} also validate the \textit{weak scalability} of our method, as the iteration counts remain uniformly bounded across different coarse mesh resolutions ($H=1/10, 1/12, 1/14, 1/15$), and its \textit{strong scalability}, as the solver performance is robustly insensitive to the heterogeneity strength characterized by the contrast ratio.

\begin{figure}[ht!] \label{fig:ex3d_nf60_V12}
\centering
\includegraphics[width=0.24\textwidth]{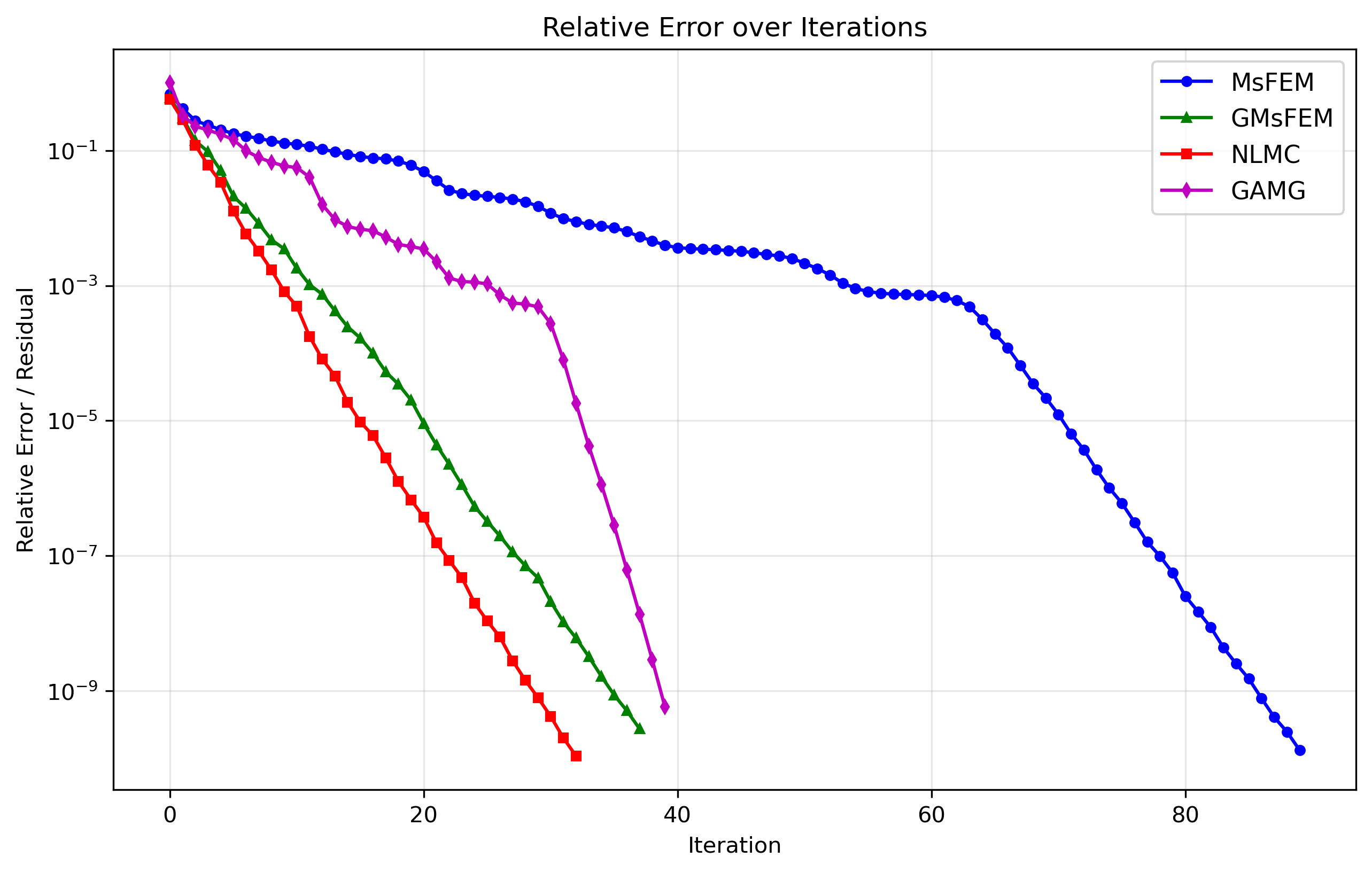}
\includegraphics[width=0.24\textwidth]{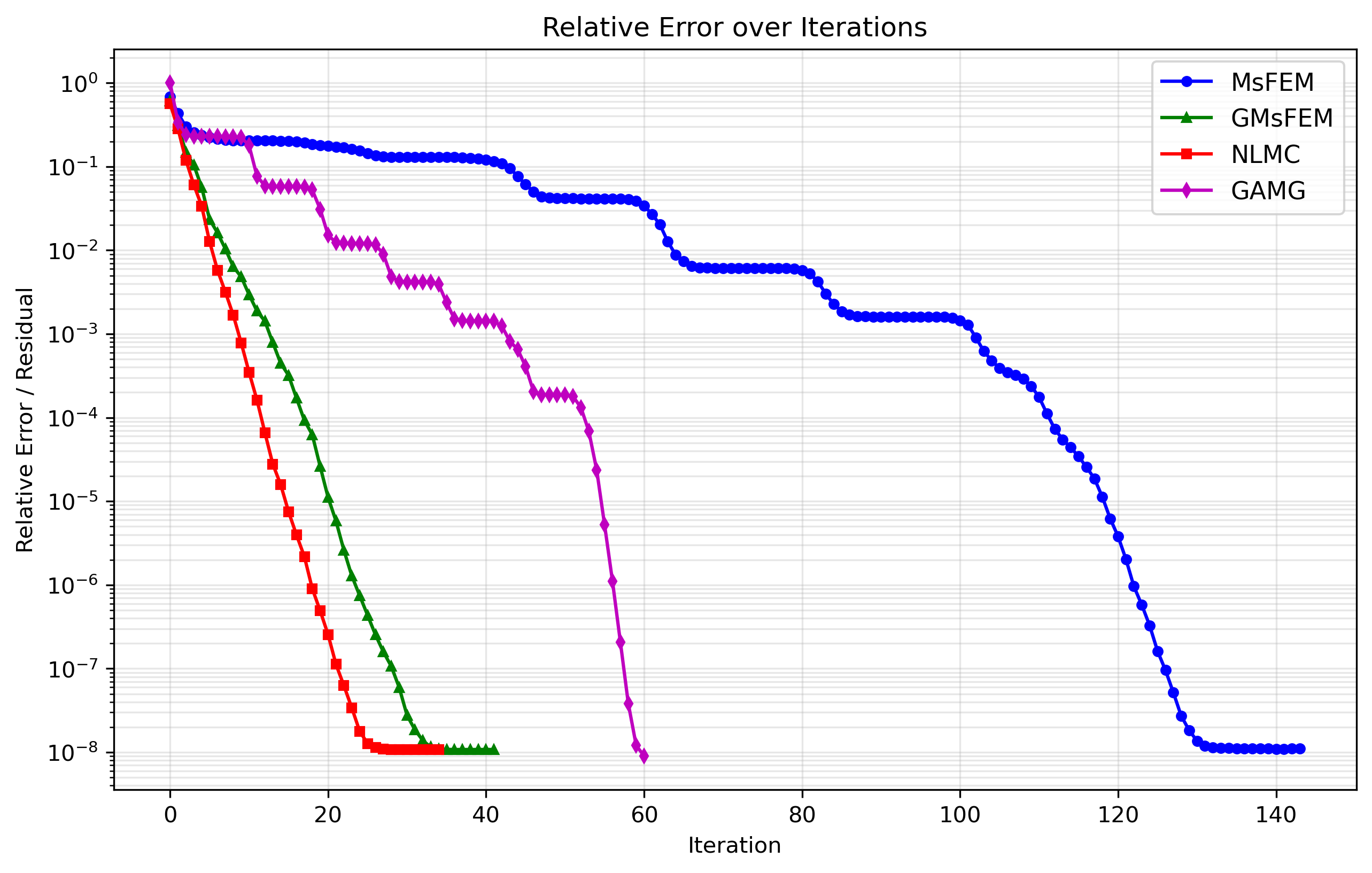}
\includegraphics[width=0.24\textwidth]{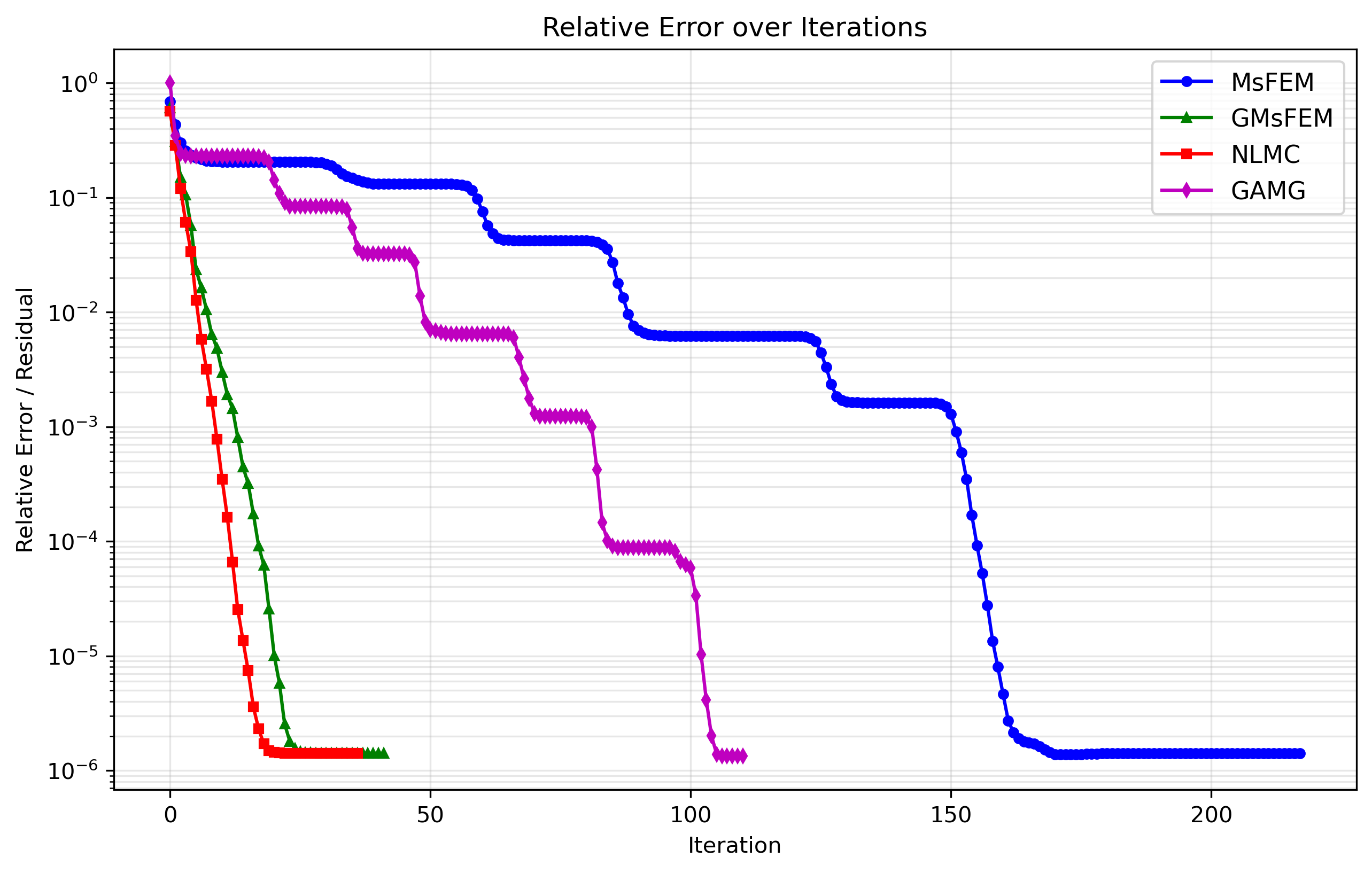}
\includegraphics[width=0.24\textwidth]{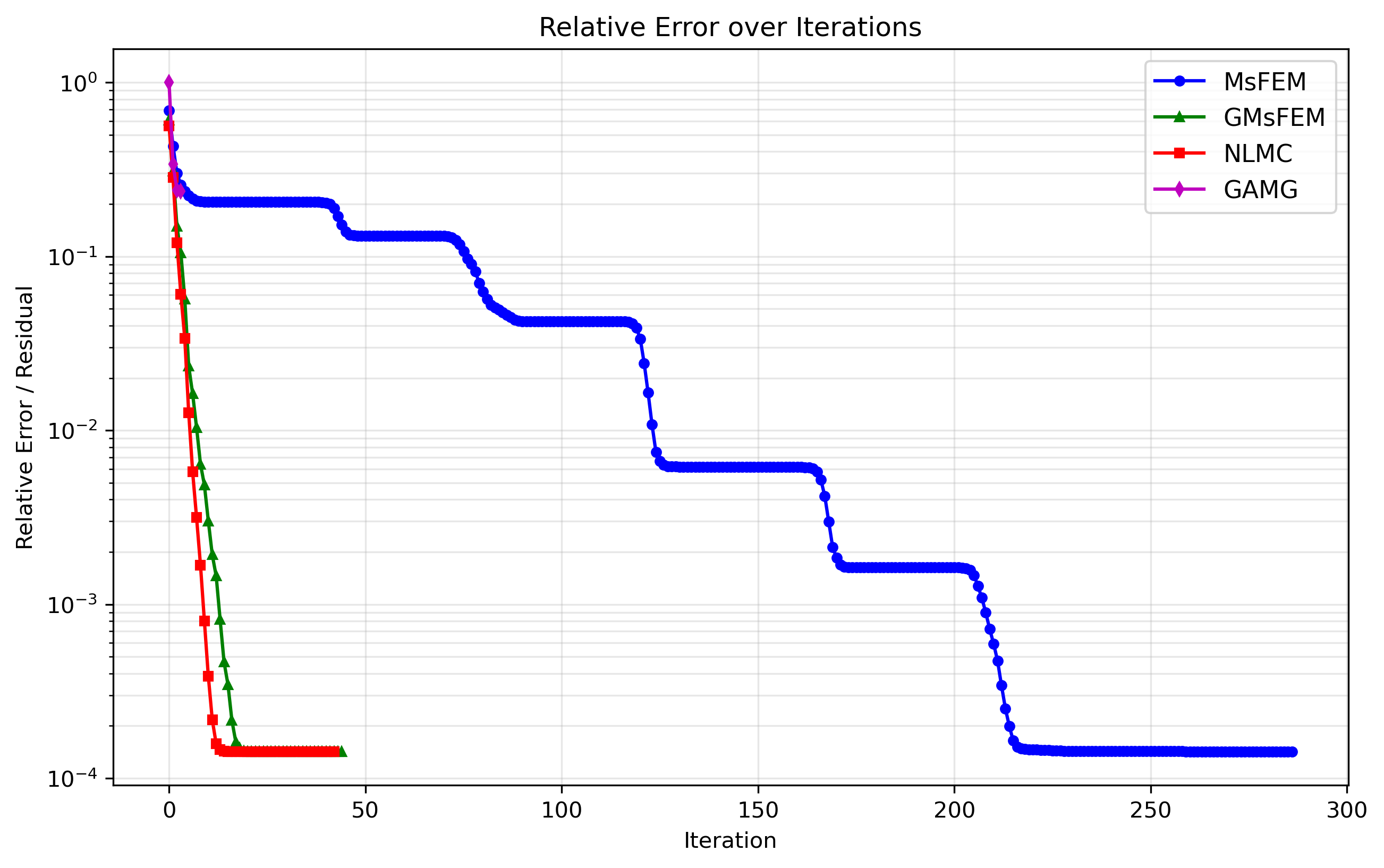}
\caption{The error history for different contrast ratio: $10^4$, $10^6$, $10^8$, $10^{10}$ from top left to bottom right. Here $NLMC$ refers to $V_{H,1}+V_{H,2}$. Time step size $dt=0.1$.}\label{fig:ex3_V12}
\end{figure}

\begin{figure}[ht!] \label{fig:ex3d_nf60_V1}
\centering
\includegraphics[width=0.24\textwidth]{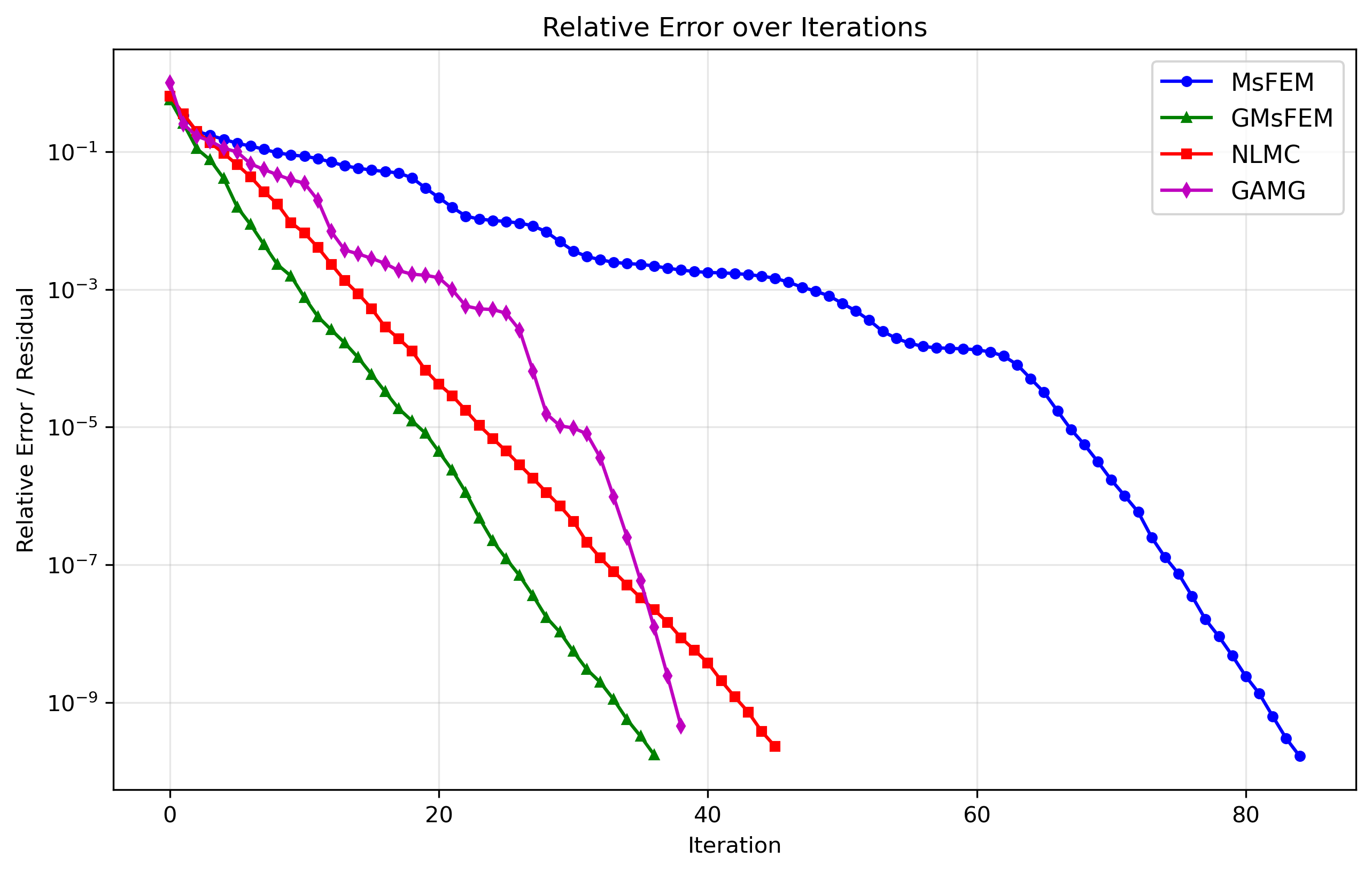}
\includegraphics[width=0.24\textwidth]{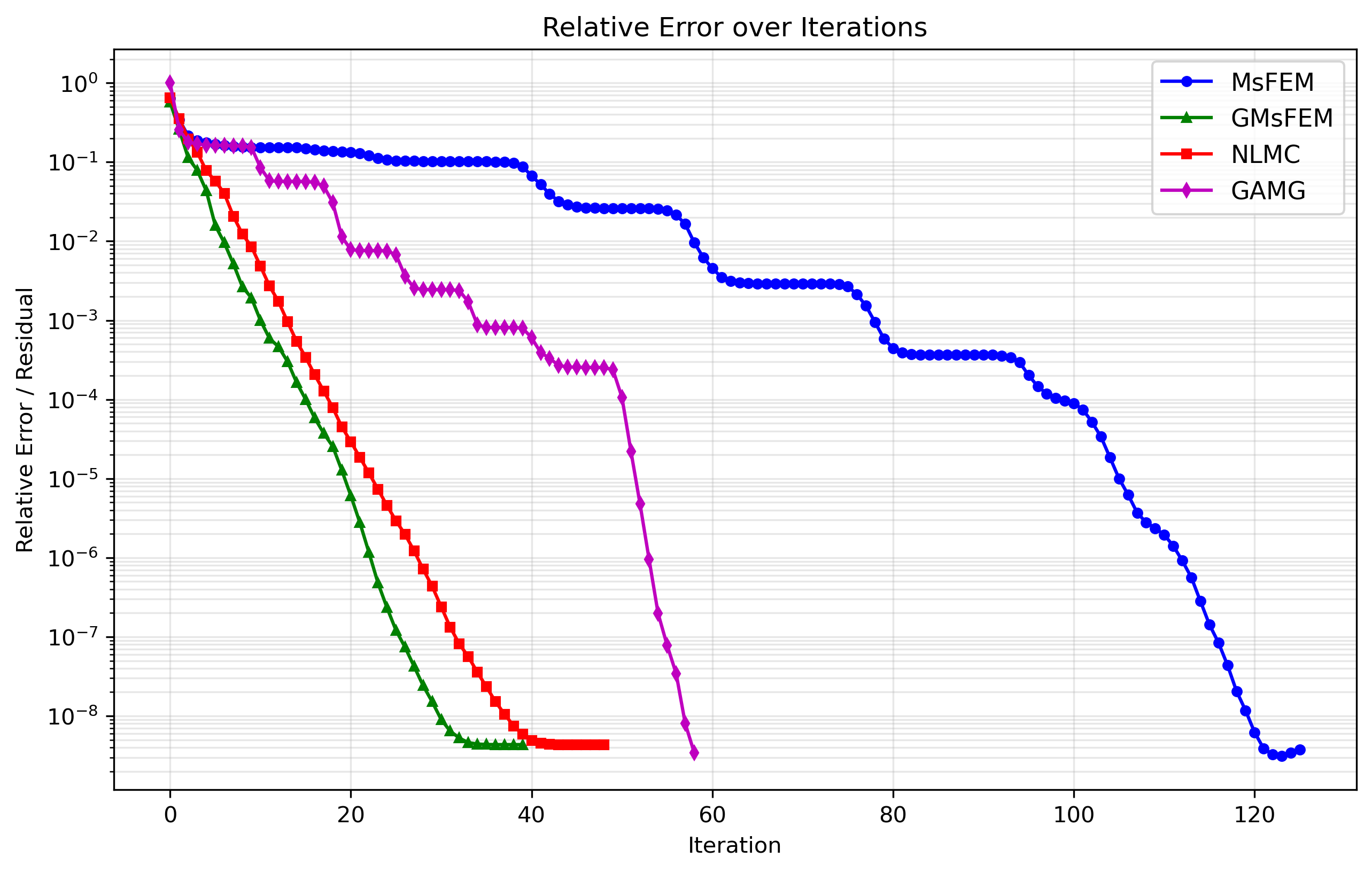}
\includegraphics[width=0.24\textwidth]{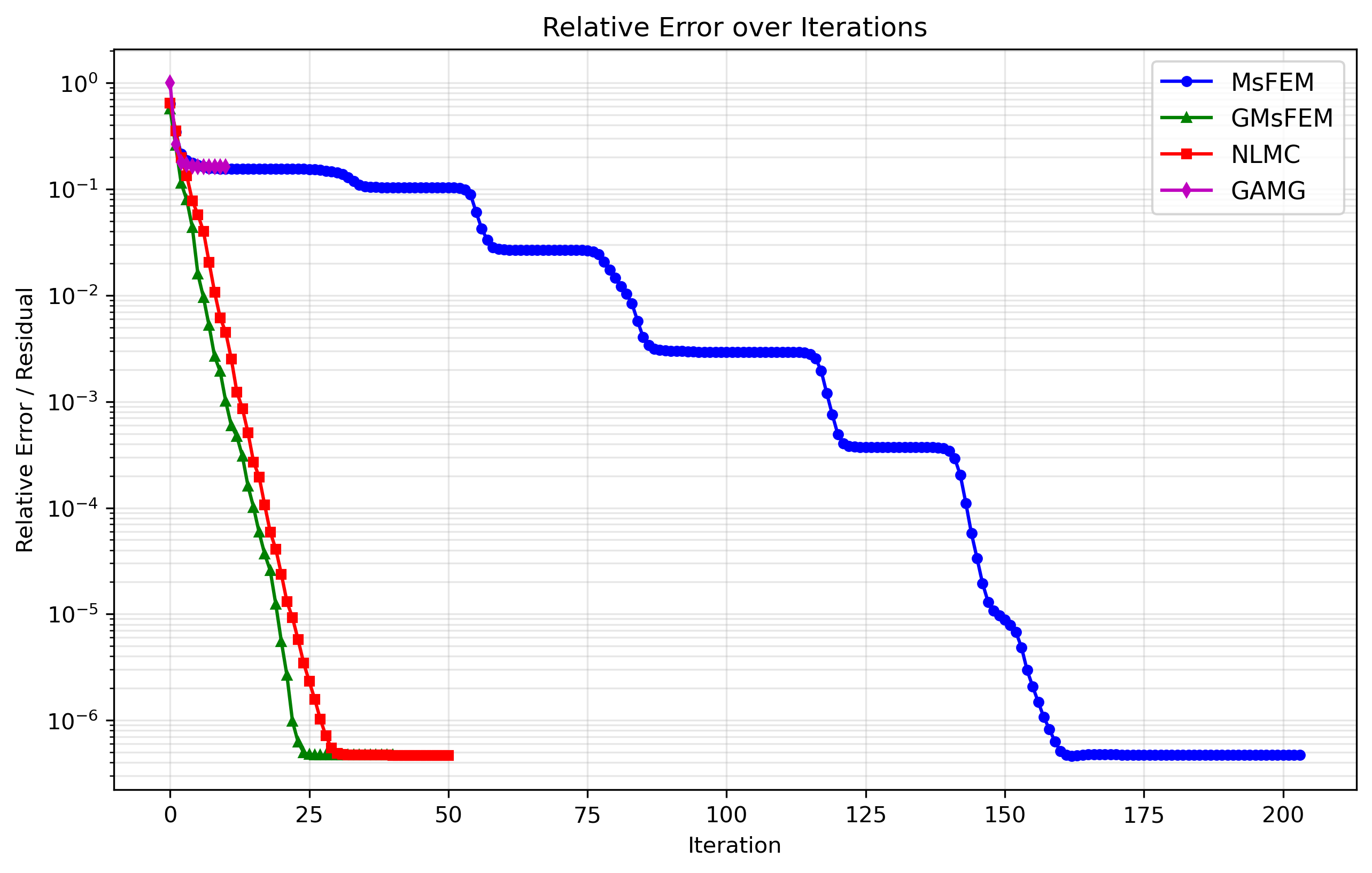}
\includegraphics[width=0.24\textwidth]{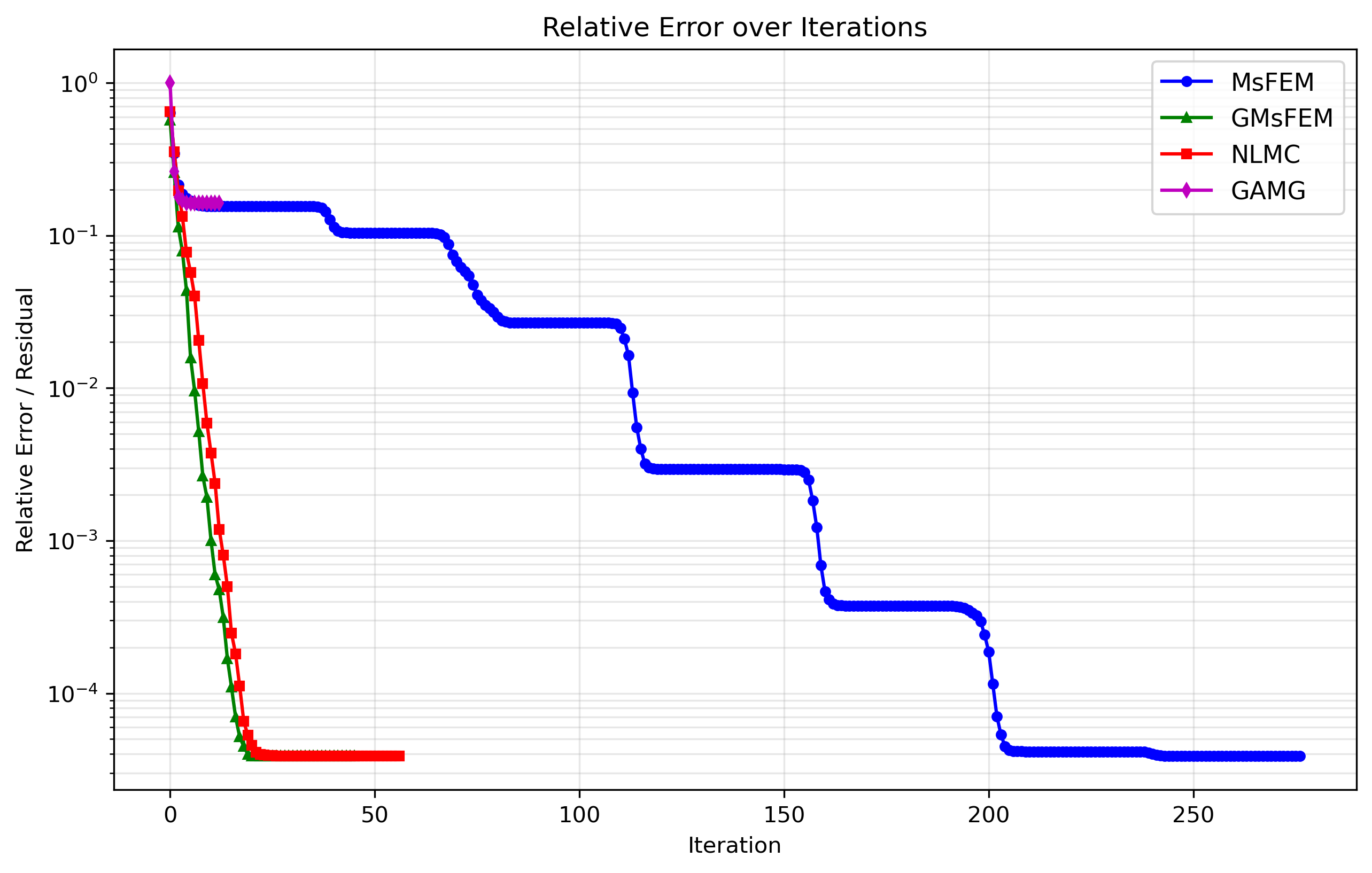}
\caption{The error history for different contrast ratio: $10^4$, $10^6$, $10^8$, $10^{10}$ from top left to bottom right. Here $NLMC$ refers to $V_{H,1}$. Time step size $dt=0.001$.}\label{fig:ex3_V1}
\end{figure}

\begin{table}[ht!]
  \centering
  \caption{CPU time (in seconds) and iteration counts for different methods and coarse ratios, where $V_H=V_{H,1}+V_{H,2}$ for CEM, $dt=0.1$. $H=1/10$, $h=1/60$, the fine Dofs is $226,981$.}
  \label{tab:pcg_iters_V12}
\begin{tabular}{@{}lcccc@{}}
    \toprule
    & $V_{\text{ms}}$ & $V_{\text{gms}}$ & $V_{H,1}+V_{H,2}$ & GAMG \\
    \cmidrule(lr){2-5}
    \diagbox[innerwidth=2.2cm,trim=l]{$cr$}{DOFs} & 1331 & 1209 & 1130 & --- \\
        \midrule
4  & 10.7 (89) & 4.6 (37) & 6.2 (32) & 3.0 (40) \\
6  & 15.9 (131) & 4.2 (34) & 5.2 (27) & 4.1 (60) \\
8  & 20.4 (169) & 3.1 (25) & 3.7 (19) & 7.4 (121) \\
10 & 26.1 (216) & 2.3 (19) & 2.5 (13) & -- \\
    \bottomrule
  \end{tabular}
\end{table}

\begin{table}[ht!]
  \centering
  \caption{CPU time (in seconds) and iteration counts for different methods and coarse ratios, where $V_H=V_{H,1}$ for CEM, $dt=0.001$. $H=1/10$, $h=1/60$, the fine Dofs is $226,981$.}
  \label{tab:pcg_iters_V1}
\begin{tabular}{@{}lcccc@{}}
    \toprule
    & $V_{\text{ms}}$ & $V_{\text{gms}}$ & $V_{H,1}$ & GAMG \\
    \cmidrule(lr){2-5}
    \diagbox[innerwidth=2.2cm,trim=l]{$cr$}{DOFs} & 1331 & 1209 & 130 & --- \\
    \midrule
4  & 10.1 (84) & 4.4 (36) & 8.7 (45) & 3.0 (40) \\
6  & 14.8 (122) & 4.1 (33) & 7.9 (41) & 3.9 (58) \\
8  & 19.4 (161) & 3.0 (24) & 5.9 (30) & -- \\
10 & 24.7 (205) & 2.3 (19) & 4.1 (21) & -- \\
    \bottomrule
  \end{tabular}
\end{table}

\begin{table}[ht!]
\centering
\caption{Iteration counts of two-level solvers for fixed fine mesh $h=1/60$, $cr=6$.}
\label{tab:strong_scalability}
\begin{tabular}{cccc}
\toprule
$H$  &$V_{\text{ms}}$ & $V_{\text{gms}}$  & $V_{H,1}+V_{H,2}$ \\
\midrule
 $1/10$ & $131$ & $34$ & $27$\\
 $1/12$ &  $131$ & $36$ & $28$\\
 $1/15$ & $127$ & $34$ & $26$ \\
\bottomrule
\end{tabular}
\end{table}

\begin{table}[ht!]
\centering
\caption{Iteration counts of two-level solvers for fixed ratio of $h/H=5$, $cr=4$.}
\label{tab:weak_scalability}
\begin{tabular}{cccccc}
\toprule
$H$ & $h$  & $V_{\text{ms}}$ & $V_{\text{gms}}$  & $V_{H,1}+V_{H,2}$ \\
\midrule
$1/10$ & $1/50$ & $92$ & $35$ & $32$ \\
$1/12$ & $1/60$ & $88$ & $33$ & $28$ \\
$1/14$ & $1/70$ & $86$ & $33$ & $32$ \\
\bottomrule
\end{tabular}
\end{table}

\section{Conlusion}
In this work, we developed a two-level overlapping preconditioner for fully implicit discretizations of time-dependent high-contrast multiscale problems. We introduced several ways to construct the coarse space. Especially, we proposed that the coarse space can be constructed within the NLMC framework which can separate the basis functions associated with connected high permeability networks from those representing the low permeability background. In addition, a relaxed energy-minimizing formulation is employed to construct the NLMC basis functions more efficiently by replacing exact continuum constraints with a projection-based penalization, and the relaxed energy-minimizing problem can be solved iteratively. We showed that the complete NLMC space provides an effective coarse space for preconditioning the high-contrast problem. For time-step sizes proportional to the square of the coarse-mesh size, the mass matrix controls the low-permeability contribution of the stiffness operator. Consequently, only the high-permeability features need to be treated carefully by the global coarse correction, leading to a substantially smaller coarse problem while preserving a condition-number bound independent of the coefficient contrast. For general time-step sizes, robustness can be retained by augmenting the high-permeability component with an efficiently constructed standard multiscale space. The proposed framework therefore combines heterogeneity-aware multiscale coarse spaces, overlapping domain decomposition, and the structure of the fully implicit transient operator to provide an efficient and robust solver for time-dependent high-contrast problems.

\section*{Acknowledgments}
YW's work is supported by the Tianyuan Fund for Mathematics of the National Natural Science Foundation of China (Grant No. 12526212), the National Natural Science Foundation of China (NSFC Grant No. 12301559).

\bibliographystyle{plain}
\bibliography{references}
\end{document}